\documentclass[a4paper,10pt]{amsart}
\usepackage{amsmath}
\usepackage{mathrsfs}
\usepackage{amsfonts}
\usepackage{graphicx}
\usepackage{color}
\usepackage{amsfonts}
\usepackage{amssymb}
\usepackage[hidelinks]{hyperref}
\usepackage{cleveref}
\usepackage{tikz}
\usepackage{subcaption}
\usetikzlibrary{arrows,shapes,automata,backgrounds,petri,positioning}
\usetikzlibrary{decorations.pathmorphing}
\usetikzlibrary{decorations.shapes}
\usetikzlibrary{decorations.text}
\usetikzlibrary{decorations.fractals}
\usetikzlibrary{decorations.footprints}
\usetikzlibrary{shadows}
\usetikzlibrary{calc}
\usetikzlibrary{spy}

\usepackage{palatino}

\usepackage[abbrev,backrefs]{amsrefs}
\usepackage{esint}

\usepackage{array}

\newcolumntype{M}[1]{>{\centering\arraybackslash}m{#1}}
\newcolumntype{N}{@{}m{0pt}@{}}

\usepackage{mathtools}
\usepackage{float}

\newtheorem{theorem}{Theorem}[section]

\newtheorem{example}[theorem]{Example}

\newtheorem{lemma}[theorem]{Lemma}

\numberwithin{equation}{section}

\newcommand{\interior}[1]{%
	{\kern0pt#1}^{\mathrm{o}}%
}

\def\supp{{\rm supp\,}}

\newcommand{\R}{\mathbb{R}}

\newcommand{\Z}{\mathbb{Z}}

\newcommand{\barint}{
	\rule[.036in]{.12in}{.009in}\kern-.16in \displaystyle\int }

\newcommand{\barcal}{\mbox{$ \rule[.036in]{.11in}{.007in}\kern-.128in\int $}}

\usepackage{enumitem}
\makeatletter
\let\@wraptoccontribs\wraptoccontribs
\makeatother

\mathchardef\mhyphen="2D

\title{Boundary estimates for the fractional spherical maximal function}

\author[R. Basak]{Riju Basak}
\address[R. Basak]{Department of Mathematics, National Taiwan Normal University, No. 88, Section 4, Tingzhou Road, Wenshan District, Taipei City, Taiwan 116, R.O.C.
}
\email{rijubasak52@gmail.com}

\author[S. Choudhary]{Surjeet Singh Choudhary}
\address[S. Choudhary]{Mathematics Division, National Center for Theoretical Sciences, NTU, Cosmology Building, No. 1, Sec. 4, Roosevelt Rd., Taipei City, Taiwan 106, R.O.C.
}

\author[D. Spector]{Daniel Spector}
\address[D. Spector]{Department of Mathematics, National Taiwan Normal University, No. 88, Section 4, Tingzhou Road, Wenshan District, Taipei City, Taiwan 116, R.O.C.
\newline
National Center for Theoretical Sciences\\No. 1 Sec. 4 Roosevelt Rd., National Taiwan
University\\Taipei, 106, Taiwan
\newline
Department of Mathematics, University of Pittsburgh, Pittsburgh, PA 15261 USA
}
\email{spectda@gapps.ntnu.edu.tw}

\subjclass[2010]{42B25, 42B37}
\keywords{Fractional Spherical maximal function, Lacunary maximal function, restricted weak-type estimates}
\date{\today}

\begin{document}
	
	\maketitle
	
	\begin{abstract} In this article, we study the fractional spherical maximal function and its lacunary counterpart. We study the necessary and sufficient conditions for $L^p-L^q$ boundedness of both maximal functions. In particular, we prove the restricted weak type estimate for both full and lacunary fractional spherical maximal functions at the boundary of the maximal $L^p-L^q$ bounded regions. 
	\end{abstract}

	\section{Introduction}
	\subsection{Main Results}
	
	For a function $f \in \mathcal{S}(\mathbb{R}^n)$ and $0\leq \alpha\leq n$, we define the fractional spherical average as
	\begin{align}\label{fractional_average}
	A_t^\alpha f(x):=t^{\alpha}\int_{S^{n-1}} f(x-ty) \, d\sigma(y)=t^{\alpha}f\ast\sigma_t(x),
	\end{align}
	where $d\sigma$ is the normalized surface measure on $S^{n-1}$, and consider the fractional spherical maximal function $A_{*}^{\alpha}$ given by
	\begin{align}\label{fmaximal}
		A_{*}^{\alpha}f(x)=\sup_{t>0} \big|A_t^\alpha f(x)\big|.
	\end{align}
	A classical result of Stein \cite{Stein} for $n \geq 3$ and Bourgain \cite{Bourgain} for $n=2$ asserts that for $\alpha=0$, the spherical maximal operator $A_{*}\equiv A_{*}^{0}$ extends as a bounded operator on $L^p(\mathbb{R}^n)$ whenever $n/(n-1)<p \leq \infty$ via the estimate
	\begin{align}\label{SteinBourgain}
		\| A_{*}(f)\|_{L^p(\mathbb{R}^n)} \lesssim \|f\|_{L^p(\mathbb{R}^n)}
	\end{align}
	for all $f \in L^p(\mathbb{R}^n)$.  
	The study of the boundedness properties of $ A_{*}^{\alpha}$ for $\alpha>0$ was initiated by Oberlin \cite{Oberlin} (see also \cite{CGG} for weighted estimates), who showed that a necessary condition for the validity of the correct scaling analogue of \eqref{SteinBourgain}, the inequality
	\begin{align}\label{OberlinSchlagSogge}
		\| A_{*}^{\alpha}(f)\|_{L^{q}(\mathbb{R}^n)} \lesssim \|f\|_{L^p(\mathbb{R}^n)}
	\end{align}
	for all $f \in L^p(\mathbb{R}^n)$, where $q=np/(n-\alpha p)$, is that the point $(1/p,1/q)$ belong to the interior of the region with vertices $O=(0,0), P=(\frac{n-1}{n}, \frac{n-1}{n}), Q=(\frac{n-1}{n}, \frac{1}{n})$ and $R=(\frac{n^2-n}{n^2+1}, \frac{n-1}{n^2+1})$.
	\begin{figure}[H]
		\centering
		\begin{subfigure}[b]{0.45\linewidth}
			\begin{tikzpicture}[scale=4]
				\tiny
				\fill[lightgray] (0,0)--(0.5,0.5)--(2/5,1/5)--cycle;
				\draw[thin][->]  (0,0)node[left]{$O$} --(1.15,0) node[right]{$\frac{1}{p}$};
				\draw[thin][->]  (0,0) --(0,1.2) node[left]{$\frac{1}{q}$};
				\draw[densely dotted] (0,1)node[left]{$(0,1)$}--(1,1)node[right]{$(1,1)$};
				\draw [densely dotted] (1,0)node[below]{$(1,0)$} --(1,1);
				\draw[densely dotted] (2/5,1/5)node[right]{$R$}--(0.5,0.5)node[above]{$P$}--(0,0)--cycle;
				\node at (0.5,0.5) {\tiny{$\circ$}};
				\node at (2/5,1/5) {\tiny{$\circ$}};
				\draw[thin] (0,0) --(0.5,0.5);
			\end{tikzpicture}
			\caption{$n= 2$.}	\label{Fig:full2}
		\end{subfigure}	
		\begin{subfigure}[b]{0.45\linewidth}
			\begin{tikzpicture}[scale=4]
				\tiny
				\fill[lightgray] (0,0)--(0.8,0.8)--(0.8,0.2)--(20/26,4/26)--cycle;
				\draw[thin][->]  (0,0)node[left]{$O$} --(1.15,0) node[right]{$\frac{1}{p}$};
				\draw[thin][->]  (0,0) --(0,1.2) node[left]{$\frac{1}{q}$};
				\draw[densely dotted] (0,1)node[left]{$(0,1)$}--(1,1)node[right]{$(1,1)$};
				\draw [densely dotted] (1,0)node[below]{$(1,0)$} --(1,1);
				\draw[densely dotted] (20/26,4/26)node[below]{$R$}--(0.8,0.2)node[right]{$Q$}--(0.8,0.8)node[above]{$P$}--(0,0)--cycle;
				\node at (0.8,0.8) {\tiny{$\circ$}};
				\node at (4/5,1/5) {\tiny{$\circ$}};
				\node at (20/26,4/26) {\tiny{$\circ$}};
				\draw[thin] (0,0) --(0.8,0.8);
			\end{tikzpicture}
			\caption{$n\geq 3$.}
		\end{subfigure}	
		\caption{Oberlin's necessary conditions for $L^p(\R^n)\to L^q(\R^n)$ boundedness of $A_{*}^{\alpha}$.}\label{Fig:full}
	\end{figure}
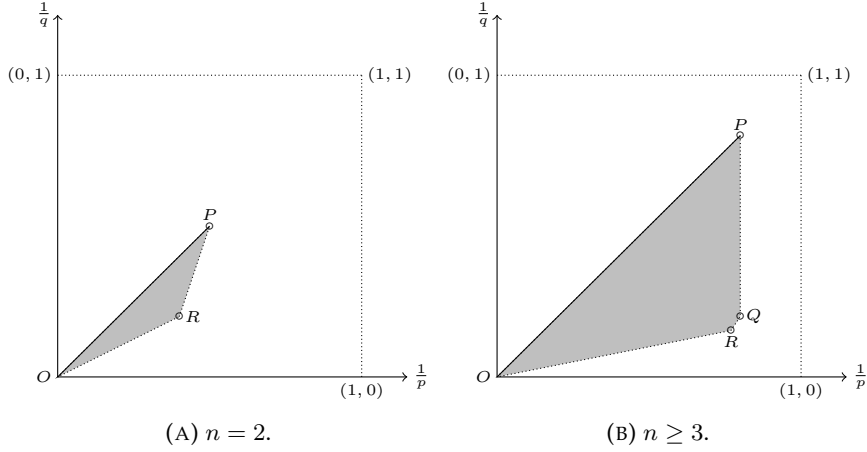

	Besides giving necessary conditions, Oberlin also showed that for $n \geq 3$ a sufficient condition for the validity of the inequality \eqref{OberlinSchlagSogge} is that the point $(1/p,1/q)$ belong to the interior of the triangle with vertices  $O=(0,0), P=(\frac{n-1}{n}, \frac{n-1}{n}), Q=(\frac{n-1}{n}, \frac{1}{n})$.  As explained by Schlag  \cite[p.~106]{Schlag} (see also \cite[Theorem 3.1.1 on p.~50]{SchlagThesis}),  and noted by Kinnunen and Saksman \cite[p.~534]{KS}, Beltran, Ramos, and Saari \cite[p.~1878]{BRS}, by an argument due to Bourgain \cite[p.~72-74]{Bourgain1986}, estimates for the local spherical maximal function 
	\begin{align}\label{local_spherical}
		\widetilde{A}_{*}f(x)=\sup_{t \in [1,2]} \big|A_t f(x)\big|
	\end{align}
	obtained by Schlag \cite{Schlag}, Schlag and Sogge \cite{Schlag-Sogge} imply the validity of the inequality \eqref{OberlinSchlagSogge} for $(1/p,1/q)$ in the interior of the region $OPQR$ when $n \geq 3$ and the triangle $OPR$ when $n=2$.  As the boundary $OP$ corresponds to the estimates of Bourgain and Stein, the only remaining question concern the boundaries $PQ$, $QR$, and $OR$ for $n\geq 3$ and $PR$ and $OR$ for $n=2$.

	The first result of this paper is the following restricted weak-type estimate for the open segments $PQ$, $QR$, and $OR$ for $n \geq 3$:
	\begin{theorem}\label{thm:full}
		Let $n\geq 3$, $\alpha \in (0,n)$, and $\frac{n}{n-1}\leq p<q\leq\infty$ with $\frac{\alpha}{n}=\frac{1}{p}-\frac{1}{q}$. If $\big(\frac{1}{p},\frac{1}{q}\big)$ on the open line segment $OR$, $QR$ or $PQ$, we have
		\begin{align*}
			\|A^{\alpha}_{*}f\|_{L^{q,\infty}(\mathbb{R}^n)}\lesssim \|f\|_{L^{p,1}(\mathbb{R}^n)}.
		\end{align*}
	\end{theorem}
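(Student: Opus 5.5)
We plan to reduce the global fractional maximal operator to the local maximal operator $\widetilde{A}_{*}$ at dyadic scales, and then sum the scales with a Bourgain-type interpolation trick (as in \cite{Bourgain1986}) that produces restricted weak type exactly at the endpoints.

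\emph{Step 1: dyadic decomposition.} Write
\begin{align*}
A^\alpha_* f \le \sup_{j\in\Z} 2^{(j+1)\alpha}\sup_{t\in[2^j,2^{j+1}]}|f\ast\sigma_t| =: \sup_j T_j f .
\end{align*}
By scaling, $T_j f(x)=2^{(j+1)\alpha}\widetilde{A}_{*}(f(2^j\cdot))(2^{-j}x)$.

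\emph{Step 2: single-scale estimates.} Fix $(1/p,1/q)$ on one of the open segments. Choose two points $(1/p_0,1/q_0)$ and $(1/p_1,1/q_1)$ on the same segment, on opposite sides of $(1/p,1/q)$. These points lie in the closed region where $\widetilde{A}_{*}:L^{p_i}\to L^{q_i}$ is bounded, by Schlag \cite{Schlag} and Schlag--Sogge \cite{Schlag-Sogge}. Indeed, the segments $PQ$, $QR$, $OR$ are edges of that region, and the interior points of an edge are strong type, except at the vertices. Here we must check that the local estimates hold on the relevant open edges: for $PQ$ this is the Stein range $p>n/(n-1)$ with $L^p\to L^q$ local bounds; for $QR$ and $OR$ this follows by interpolation with the endpoints known in \cite{Schlag-Sogge} and $L^\infty$.

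Scaling then gives
\begin{align*}
\|T_j f\|_{L^{q_i}}\lesssim 2^{j(\alpha-n/p_i+n/q_i)}\|f\|_{L^{p_i}} = 2^{j\epsilon_i}\|f\|_{L^{p_i}},
\end{align*}
with $\epsilon_0$ and $\epsilon_1$ of opposite signs. The relation $\alpha/n=1/p-1/q$ gives $\epsilon=0$ at the target point, and $\epsilon$ varies affinely along the segment provided the segment is not parallel to the diagonal. This holds for $PQ$, $QR$ and $OR$, whereas $OP$ is the diagonal, where $\alpha=0$.

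\emph{Step 3: Bourgain's interpolation lemma.} For sublinear operators $T_j$ with $\|T_j\|_{L^{p_i}\to L^{q_i}}\lesssim 2^{j\epsilon_i}$ and $\epsilon_0<0<\epsilon_1$, Bourgain's trick yields $\sup_j|T_j|:L^{p,1}\to L^{q,\infty}$ at the interpolated point. To apply it, test on $f=\chi_E$. For a level $\lambda$ we have
\begin{align*}
|\{\sup_j T_j\chi_E>\lambda\}|\le \sum_{j\le J}|\{T_j\chi_E>\lambda\}|+\sum_{j>J}|\{T_j\chi_E>\lambda\}| .
\end{align*}
Bound the first sum with one exponent pair and the second with the other, via Chebyshev:
\begin{align*}
\sum_{j}\lambda^{-q_i}2^{j\epsilon_iq_i}|E|^{q_i/p_i}.
\end{align*}
These are geometric sums. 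Optimizing over $J$ gives $\lambda^{-q}|E|^{q/p}$, because the exponents are convex combinations matching the target.

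\emph{Main obstacle.} This endpoint summation needs $\epsilon_i$ with opposite signs along the segment, so the real work is Step 2: securing strong-type local estimates at points of the closed edges near the target. The most delicate case is the edge $QR$ near the vertex $R$ (the Schlag--Sogge region). There we would use strong bounds at interior points of the edge, since the vertices themselves are excluded, which is why the theorem is stated only for the open segments. The case $q=\infty$ does not arise, since that would force $(1/p,1/q)=O$, which is not on the open segments.
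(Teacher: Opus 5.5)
There is a genuine gap, in Step 2. Your Steps 1 and 3 are sound. Scaling gives $\|T_j\|\lesssim 2^{j\epsilon}$ with $\epsilon=\alpha-n(\frac{1}{p}-\frac{1}{q})$. Because you apply Chebyshev to each $T_j\chi_E$ separately, the union bound over $j$ followed by optimization in $J$ does give $|\{\sup_j T_j\chi_E>\lambda\}|\lesssim\lambda^{-q}|E|^{q/p}$. You rightly call Step 2 the crux, but you justify it incorrectly.

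On $PQ$ the asserted input is false. Every point of $PQ$ has $p=\frac{n}{n-1}$ exactly, not $p>\frac{n}{n-1}$. Stein's function $f(y)=|y|^{1-n}(\log\frac{1}{|y|})^{-1}\chi_{\{|y|<1/2\}}$ lies in $L^{n/(n-1)}(\mathbb{R}^n)$, while $\widetilde{A}_{*}f=\infty$ on $\{1\le|x|\le 2\}$. So $\widetilde{A}_{*}$ has no strong-type $L^{p}\to L^{q}$ bound at any point of $PQ$; this is the obstruction the paper cites when it says no improvement is possible on $PQ$.

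On $QR$ and $OR$, \cite{Schlag} and \cite{Schlag-Sogge} only cover the open quadrilateral, so they contain no estimates at $Q$ or $R$ to interpolate with. Strong type for $\widetilde{A}_{*}$ on the open edges $QR$ and $OR$ is Lee's endpoint theorem \cite{Lee1}: restricted weak type at the vertices, then interpolation. The paper calls that argument highly non-trivial. As written, your proof rests on a false statement for $PQ$, and for $QR$ and $OR$ on deep endpoint results that are not in the sources you cite.

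The plan can be repaired, and the repair is already built into your Step 3. That step only uses $|\{T_j\chi_E>\lambda\}|\lesssim\lambda^{-q_i}2^{j\epsilon_i q_i}|E|^{q_i/p_i}$, i.e.\ restricted weak-type bounds for $\widetilde{A}_{*}$ at the two chosen points, not strong type. So choose the two points as follows:
\begin{itemize}
\item for $PQ$, take $P$ and $Q$, using Bourgain's restricted weak-type endpoint at $P$ for $n\ge 3$ \cite{Bourgain} and Lee's estimate at $Q$;
\item for $QR$, take $Q$ and $R$;
\item for $OR$, take two points straddling the target, where restricted weak type follows from Lee's estimate at $R$ and the trivial $L^\infty$ bound.
\end{itemize}
The two exponents $\epsilon$ then have opposite signs, because the target satisfies $0<\alpha<n-2$, $n-2<\alpha<\frac{n(n-1)^2}{n^2+1}$, and $0<\alpha<\frac{n(n-1)^2}{n^2+1}$, respectively.

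With this correction your route is genuinely different from the paper's. You decompose in scale and obtain the theorem as a short corollary of the local endpoint theorems, importing them wholesale. The paper instead decomposes in frequency, $A_*^\alpha\le\sum_{j}M_{*,j}^\alpha$, with each piece taken over all scales. It uses only frequency-localized inputs: the $L^p$ bounds for $M_{*,j}^0$, Lee's frequency-localized $L^2\to L^{\frac{2(n+1)}{n-1}}$ estimate, and $M_{*,j}^\alpha\lesssim 2^j\mathcal{M}^\alpha$. It then applies Lemma \ref{Bourgain} in the frequency parameter $j$ along the line $\frac{1}{p}-\frac{1}{q}=\frac{\alpha}{n}$. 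In effect it re-runs Lee's argument for all scales at once rather than citing its conclusion.
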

	
	When $n=2$ we have similar estimates for the open segments  $PR$ and $OR$:
	\begin{theorem}\label{thm:full2}
		Let $\alpha \in (0,2)$, $2< p<q\leq\infty$ with $\frac{\alpha}{2}=\frac{1}{p}-\frac{1}{q}$. If $\big(\frac{1}{p},\frac{1}{q}\big)$ lies on the open line segment $PR$ or $RO$, we have
		\begin{align*}
			\|A^{\alpha}_{*}f\|_{L^{q,\infty}(\mathbb{R}^2)}\lesssim \|f\|_{L^{p,1}(\mathbb{R}^2)}.
		\end{align*}
	\end{theorem}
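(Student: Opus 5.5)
The plan is to combine a Littlewood--Paley decomposition of $\sigma$ with Bourgain's interpolation trick. The trick yields restricted weak type at a point where two strong-type estimates, one with geometric decay $2^{-k\varepsilon_0}$ and one with geometric growth $2^{k\varepsilon_1}$, balance exactly.

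\emph{Decomposition.} Write $\widehat{\sigma}=\sum_{k\ge 0}\widehat{\sigma}\,\psi_k$, where $\psi_0$ is supported in $|\xi|\lesssim 1$ and $\psi_k$ in $|\xi|\sim 2^k$. Set $\sigma^k=\mathcal F^{-1}(\widehat\sigma\psi_k)$ and
\[
T_k f=\sup_{t>0}t^\alpha |f*\sigma^k_t| .
\]
The piece $T_0$ is dominated by the fractional Hardy--Littlewood maximal function, which maps $L^p\to L^q$ strongly whenever $\frac{\alpha}{2}=\frac1p-\frac1q$.

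For $k\ge1$, split $t$ into dyadic ranges $t\in[2^j,2^{j+1}]$. On the range $t\sim 2^j$ the kernel $\sigma^k_t$ has frequency support at $|\xi|\sim 2^{k-j}$, so we may replace $f$ by $\widetilde P_{k-j}f$, a slightly fattened Littlewood--Paley projection. Using $\sup_j\le(\sum_j|\cdot|^q)^{1/q}$ and scaling, we get
\[
\|T_kf\|_q\lesssim N_k(p,q)\Big(\sum_j\|\widetilde P_{k-j}f\|_p^q\Big)^{1/q},
\]
where $N_k(p,q)$ is the $L^p\to L^q$ norm of the local operator $f\mapsto\sup_{1\le t\le2}|f*\sigma^k_t|$. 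This step uses that the exponent relation $\frac\alpha2=\frac1p-\frac1q$ makes the $2^{j\alpha}$ factor cancel under rescaling.

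Since $q\ge p$ we have $\ell^p\subset\ell^q$. Since $p\ge2$, the square function bound together with Minkowski's inequality gives $\sum_j\|\widetilde P_jf\|_p^p\lesssim\|f\|_p^p$. This is exactly where the hypothesis $p>2$ enters. Consequently $\|T_k\|_{L^p\to L^q}\lesssim N_k(p,q)$ for all $(1/p,1/q)$ with $p\ge2$.

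\emph{Local bounds.} Next we need sharp bounds for $N_k(p,q)$. Trivially, $N_k(\infty,\infty)\lesssim1$ (this is $O$), $N_k(1,1)\lesssim 2^{k}$, and $N_k(1,\infty)\lesssim 2^{2k}$. At $P$, Bourgain's circular maximal theorem, in its frequency-localized form, gives $N_k(2,2)\lesssim 2^{-k\delta}$ for points $(1/p,1/p)$ with $p>2$, and a bounded or at most logarithmic norm at $p=2$. At $R=(2/5,1/5)$ we use the Schlag/Schlag--Sogge local smoothing estimates. More precisely, we want the sharp form: decay $2^{-k\varepsilon}$ strictly inside the triangle $OPR$ and growth $2^{k\varepsilon'}$ strictly outside it, with exponent $a(p,q)$ affine and vanishing on the lines $PR$ and $OR$.

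Interpolating these vertex bounds (Riesz--Thorin for the linearized sup) produces, near any interior point of $PR$ or $OR$, an affine exponent $a(p,q)$ with $N_k\lesssim 2^{k a(p,q)}$. This exponent is negative inside the triangle, positive beyond the segment, and zero on it.

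\emph{Bourgain's trick.} Fix a target point $(1/p,1/q)$ on the open segment $PR$ or $OR$, and hence fix $\alpha$. Choose two points $(1/p_0,1/q_0)$ and $(1/p_1,1/q_1)$ on the same line $\frac1p-\frac1q=\frac\alpha2$, one just inside the region with $a<0$ and one just outside with $a>0$, both still satisfying $p_i>2$. This is possible because the target point avoids $P$. Then $\|T_k\|_{p_0\to q_0}\lesssim 2^{-k\varepsilon_0}$ and $\|T_k\|_{p_1\to q_1}\lesssim2^{k\varepsilon_1}$, and the target point is the interpolant with weight $\theta=\varepsilon_0/(\varepsilon_0+\varepsilon_1)$. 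Bourgain's interpolation lemma, applied to the sublinear sum $\sum_k T_k$ on characteristic functions, gives $L^{p,1}\to L^{q,\infty}$ for $\sum_{k\ge1}T_k$. Adding the $T_0$ bound finishes the proof.

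\emph{Main obstacle.} The hard step is obtaining local bounds $N_k\lesssim 2^{k a(p,q)}$ with an exponent that vanishes \emph{exactly} on $PR$ and $OR$. Near-endpoint estimates carrying a $2^{k\epsilon}$ loss at $R$ are not sharp enough, since with them $a$ never vanishes exactly at the target point. I would first try to use sharp endpoint information at $R$: restricted weak type, or $\epsilon$-free frequency-localized bounds, for the local circular maximal function, as in Lee's endpoint version of the Schlag--Sogge estimates. I would pair this with the exact $2^{k}$ and $2^{2k}$ trivial bounds, and check that the resulting piecewise-affine exponent is genuinely zero along both open segments.
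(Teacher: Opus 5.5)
Your architecture matches the paper's: Littlewood--Paley pieces, transfer from the local to the global operator by rescaling together with $\ell^p\subset\ell^q$ and the square function for $p\ge 2$, and Bourgain's interpolation along the line $\frac1p-\frac1q=\frac\alpha2$. The gap is the step you call the main obstacle, and the inputs you list would not close it.

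Bourgain's lemma gives restricted weak type at the zero of the chord joining the two exponents. So you need one affine exponent valid on \emph{both} sides of the target; an exponent that merely vanishes on $PR$ and $OR$ is not enough. Sharp information at the single vertex $R$ (restricted weak type, or an $\epsilon$-free bound) only says the exponent is $0$ there, as it also is at $O$ and $P$. Decay inside $OPR$ must then come from non-sharp bounds, such as the diagonal estimates $N_k(p,p)\lesssim 2^{-k\delta}$ or Schlag--Sogge with $\epsilon$-losses. The inner exponent is then a multiple $\lambda<1$ of the outer one, so it has a convex kink along $PR$ and $OR$. The chord's zero therefore lies strictly inside the triangle, where strong type is already known. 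Moreover $N_k(1,\infty)\lesssim 2^{k}$, not $2^{2k}$, since $|\sigma^k(x)|\lesssim 2^k(1+2^k\,\mathrm{dist}(x,S^1))^{-N}$. Using $2^{2k}$ would by itself create a mismatch between the exponents on the two sides.

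The missing ingredient, used in the paper's Lemma \ref{decay:full2}, is Lee's $\epsilon$-free frequency-localized estimate \cite[(1.5)]{Lee1}: $N_k(p,q)\lesssim 2^{k(\frac{3}{2p}-\frac{1}{2q}-\frac12)}$ for $\frac1p+\frac3q=1$ and $q>\frac{14}{3}$. This holds on a piece of the line through $R$ and $(1,0)$ that enters $OPR$, down to $R_0=(\frac5{14},\frac3{14})$. On it the exponent $1-\frac5q$ coincides with two affine functions:
\begin{itemize}
\item the Knapp exponent $\frac{3}{2p}-\frac{1}{2q}-\frac12$, which is $0$ at $P$, $1$ at $(1,0)$, and vanishes exactly on the line $PR$;
\item the function $\frac1p-\frac2q$, which is $0$ at $O$, $1$ at $(1,0)$, and vanishes exactly on the line $OR$.
\end{itemize}
Interpolating with the bound at $P$ (resp.\ $O$) and the correct bound at $(1,0)$ gives these affine exponents on the triangles with vertices $P,(1,0),R_0$ and $O,(1,0),R_0$. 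These triangles contain neighbourhoods of the open segments $PR$ and $OR$. With this input your scheme, interpolating local bounds with $p_i>2$ and then transferring, goes through, and it even covers all of $OR$ at once. The paper instead interpolates the global operators $M^{\alpha}_{*,j}$ (Lemma \ref{full2:interpolate}), so its growth points for $PR$ are allowed to have $p<2$. For $\alpha\le\frac27$ on $OR$, the paper interpolates the restricted weak-type bound at $\alpha=\frac13$ with $L^\infty$.
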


	By standard interpolation, Theorems \ref{thm:full} and \ref{thm:full2} recover the consequences of Schlag's \cite{Schlag}, Schlag and Sogge's \cite{Schlag-Sogge} results noted by Kinnunen and Saksman \cite[p.~534]{KS}:    
	\begin{theorem}\label{standard_interpolation}
		Let $n\geq 2$, $\alpha \in [0,n]$, and $\frac{n}{n-1}\leq p\leq q\leq\infty$ with $\frac{\alpha}{n}=\frac{1}{p}-\frac{1}{q}$.  If $\big(\frac{1}{p},\frac{1}{q}\big)$ lies in the interior of the region $OPQR$ or the open line segment $OP$, then
		\begin{align*}
			\|A_{*}^{\alpha}f\|_{L^q(\mathbb{R}^n)}\leq C \|f\|_{L^p(\mathbb{R}^n)}.
		\end{align*}
	\end{theorem}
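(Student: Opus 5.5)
We derive Theorem \ref{standard_interpolation} from Theorems \ref{thm:full} and \ref{thm:full2}, together with the Stein--Bourgain estimate \eqref{SteinBourgain} for the segment $OP$. The tool is the Marcinkiewicz interpolation theorem in its restricted weak-type form, due to Stein--Weiss and Hunt. Because $A_*^\alpha$ depends on $\alpha$, we first linearize. Fix measurable $t(\cdot):\mathbb{R}^n\to(0,\infty)$ and set
\[
T f(x) = A^{\alpha}_{t(x)}f(x)=t(x)^\alpha (f*\sigma_{t(x)})(x).
\]
We will interpolate for the larger family $T_z f(x)=t(x)^{z}(f*\sigma_{t(x)})(x)$, $0\le \operatorname{Re} z\le n$, which are sublinear in modulus and bounded pointwise by $A_*^{\operatorname{Re} z}|f|$. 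Since $\alpha$ is determined by $(1/p,1/q)$ through $\alpha/n=1/p-1/q$, the exponent varies linearly along any segment in the $(1/p,1/q)$ plane.

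\emph{Step 1: interior points.} Let $(1/p,1/q)$ lie in the interior of $OPQR$ (of $OPR$ when $n=2$). Choose two points $(1/p_0,1/q_0)$ and $(1/p_1,1/q_1)$ on open boundary segments covered by Theorem \ref{thm:full} (resp.\ \ref{thm:full2}) such that $(1/p,1/q)$ lies on the open segment joining them, with $p_0\ne p_1$ and $q_0\ne q_1$. For instance, take the line through the point with a generic slope, neither horizontal, vertical, nor of slope $1$. It meets the boundary in two points on open edges, since we can avoid the finitely many vertices. Write $\theta$ for the convex parameter. Then
\[
\alpha=(1-\theta)\alpha_0+\theta\alpha_1 .
\]
At the endpoints we have the restricted weak-type bounds $L^{p_i,1}\to L^{q_i,\infty}$ for $A_*^{\alpha_i}$.

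\emph{Step 2: the main obstacle, interpolating operators that change with $\alpha$.} A direct real interpolation would require a single operator. I would use Stein's analytic interpolation with restricted weak-type endpoints, applied to $T_z$ tested on characteristic functions. The clean alternative is to avoid analytic families by observing a pointwise scaling identity. If $\alpha=(1-\theta)\alpha_0+\theta\alpha_1$, then
\[
t^\alpha|f*\sigma_t|\le (t^{\alpha_0}|f*\sigma_t|)^{1-\theta}(t^{\alpha_1}|f*\sigma_t|)^{\theta},
\]
with equality, so
\[
A_*^\alpha f\le (A_*^{\alpha_0}f)^{1-\theta}(A_*^{\alpha_1}f)^{\theta}.
\]
For $f=\chi_E$, Hölder's inequality in Lorentz spaces (weak spaces) gives
\[
\|A_*^\alpha\chi_E\|_{L^{q,\infty}}\lesssim |E|^{(1-\theta)/p_0+\theta/p_1}=|E|^{1/p}.
\]
Hence $A_*^\alpha$ is of restricted weak type $(p,q)$ at every point of a small neighbourhood of $(1/p,1/q)$, with $\alpha$ tied to the point. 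Then I apply off-diagonal Marcinkiewicz in the following way. On the line $\alpha=$ const through $(1/p,1/q)$, which has slope $1$, pick two nearby points with $p_0'<p<p_1'$. The same fixed operator $A_*^\alpha$ is restricted weak type at both, because both lie in the interior and each is obtained by the Hölder argument above. Since $p\le q$, the Stein--Weiss theorem yields the strong $(p,q)$ bound for the sublinear operator $A_*^\alpha$ (it is sublinear and dominated by $A_*^\alpha|f|$).

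\emph{Step 3: the segment $OP$.} Here $\alpha=0$, and the result is exactly \eqref{SteinBourgain}. One could also interpolate the Stein--Bourgain bounds at two points of $OP$, which is the same fixed operator. This completes the plan. The only delicate point is the choice of geometry in Step 1: the neighbourhood must stay inside the region where both endpoints land on open edges covered by the theorems, which is possible by openness.
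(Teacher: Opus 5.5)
Your argument is correct. It is also more explicit than the paper, which says only that Theorem \ref{standard_interpolation} follows from Theorems \ref{thm:full} and \ref{thm:full2} (with \eqref{SteinBourgain} on $OP$) ``by standard interpolation''. Given the tools set up in Section 2, the natural reading of that phrase is two steps. First, complex interpolation in $\alpha$ via analytic families, linearized as in Lemma \ref{pointwise_approximation} and fed into Sagher's Lemma \ref{complexinter}, produces restricted weak-type bounds at interior points. Second, off-diagonal Marcinkiewicz interpolation along lines of fixed $\alpha$ upgrades them to strong type.

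The genuinely different ingredient in your proof is the pointwise bound $A_*^\alpha f\le (A_*^{\alpha_0}f)^{1-\theta}(A_*^{\alpha_1}f)^{\theta}$. It holds because $\alpha$ enters only through the factor $t^\alpha$. With weak-type H\"older applied to characteristic functions, it gives restricted weak type at every interior point with no linearization or analytic family. The Stein--Weiss theorem for the fixed operator $A_*^\alpha$ along the slope-one line then gives strong type, since $p\le q$.

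This is not merely cosmetic. For $n\ge 3$ the line $\alpha=n-2$ runs through the vertex $Q$, which Theorem \ref{thm:full} does not cover. So real interpolation between boundary points on fixed-$\alpha$ lines cannot reach the interior points on that line, and some interpolation in $\alpha$ is needed. Your convexity step supplies it uniformly. The complex-interpolation route is more robust, since it does not rely on this multiplicative structure, but here yours is simpler.

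Small remarks:
\begin{itemize}
\item The family $T_z$ you set up is never used and can be dropped.
\item In Step 1 you should either allow an endpoint on $OP$, where \eqref{SteinBourgain} gives restricted weak type, or note that the line can always be chosen to avoid $OP$.
\item The conditions excluding horizontal, vertical and slope-one lines are unnecessary for the H\"older step.
\item For the final step, state explicitly that $A_*^\alpha$ is monotone and countably subadditive on nonnegative functions. That is what the restricted weak-type Marcinkiewicz argument needs for a sublinear operator.
\end{itemize}
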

		
		We next consider the lacunary fractional spherical maximal operator
		\begin{align}\label{lacunary_fmaximal}
			A_{lac}^{\alpha}f(x)=\sup_{k\in \mathbb{Z}} \big|A_{2^k}^\alpha f(x)\big|.
		\end{align}
		For $1<p \leq \infty$, Coifman and Weiss \cite[p.~246]{CoifmanWeiss} and C.P. Calder\'on \cite{Calderon_lacunary} (see also \cite{DRF}) proved an analogue of Bourgain and Stein's inequality \eqref{SteinBourgain}, that $A_{lac}\equiv A_{lac}^{0}$ operator admits a bounded extension on $L^p(\mathbb{R}^n)$:  For $n \geq 2$, one has
		\begin{align}\label{Calderon}
			\| A_{lac}(f)\|_{L^p(\mathbb{R}^n)} \lesssim \|f\|_{L^p(\mathbb{R}^n)}
		\end{align}
		for all $f \in L^p(\mathbb{R}^n)$.  When $p=1$,  Christ \cite{Christ} established an estimate from the Hardy space $\mathcal{H}^1(\mathbb{R}^n)$ to weak $L^1(\mathbb{R}^n)$:
		\begin{align}
			\sup_{t>0} \left|\left\{A_{\text{lac}}(f)>t\right\}\right|  &\lesssim \|f\|_{\mathcal{H}^1(\mathbb{R}^n)}
		\end{align}
		for all $f \in \mathcal{H}^1(\mathbb{R}^n)$.   The question of whether $A_{\text{lac}}$ maps $L^1(\mathbb{R}^n)$ to weak $L^1(\mathbb{R}^n)$ is still open, we refer the reader to \cite{STW,CladekKrause}, where estimates in Orlicz spaces near $L^1(\mathbb{R}^n)$ into weak $L^1(\mathbb{R}^n)$ are established, and to the survey \cite{RoosSeeger} for more details.
		
		Turning our attention to the case $\alpha>0$, we first observe that a necessary condition for the boundedness of the lacunary fractional spherical maximal operator is the boundedness of the averaging operator.   In combination with Lacey's examples \cite[Proposition 5.1(1) on p.~624]{Lacey} this implies that 
		\begin{align}
			\|A^{\alpha}_{lac}f\|_{L^{q}(\mathbb{R}^n)}\lesssim \|f\|_{L^{p}(\mathbb{R}^n)}
		\end{align}
		can only hold for all $f \in L^p(\mathbb{R}^n)$ if $(1/p,1/q)$ belongs to the triangle with vertices $O=(0,0), A=(1,1),$ and  $B=(\frac{n}{n+1}, \frac{1}{n+1})$:
		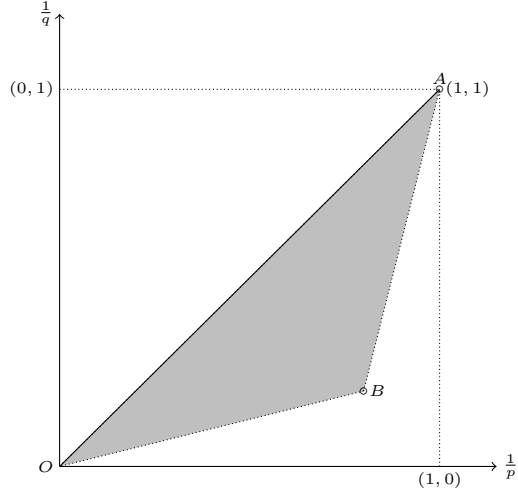
\begin{figure}[H]
			\centering
			\begin{tikzpicture}[scale=5]
				\tiny
				\fill[lightgray] (0,0)--(0.8,0.2)--(1,1)--cycle;
				\draw[thin][->]  (0,0)node[left]{$O$} --(1.15,0) node[right]{$\frac{1}{p}$};
				\draw[thin][->]  (0,0) --(0,1.2) node[left]{$\frac{1}{q}$};
				\draw[densely dotted] (0,1)node[left]{$(0,1)$}--(1,1)node[right]{$(1,1)$};
				\draw [densely dotted] (1,0)node[below]{$(1,0)$} --(1,1);
				\draw[densely dotted] (0.8,0.2)node[right]{$B$}--(1,1)node[above]{$A$}--(0,0)--cycle;
				\node at (1,1) {\tiny{$\circ$}};
				\node at (0.8,0.2) {\tiny{$\circ$}};
				\draw[thin] (0,0) --(1,1);
			\end{tikzpicture}
			\caption{Necessary conditions for $L^p(\R^n)\to L^q(\R^n)$ boundedness of $A_{lac}^{\alpha}$.}
			\label{Fig:lac}
		\end{figure}
		
		Our next result shows that the estimate holds in the interior of this region, with restricted weak-type estimates on the open segments of the boundary.
		\begin{theorem}\label{thm:lac}
			Let $n\geq 2$, $\alpha \in [0,n]$, and $1<p\leq q\leq \infty$ with $\frac{\alpha}{n}=\frac{1}{p}-\frac{1}{q}$. If $\big(\frac{1}{p},\frac{1}{q}\big)$ lies in the interior of the triangle $OAB$ or the open line segment $OA$, then
			\begin{align*}
				\|A^{\alpha}_{lac}f\|_{L^q(\mathbb{R}^n)}\lesssim \|f\|_{L^p(\mathbb{R}^n)}.
			\end{align*}
			Moreover, when $\big(\frac{1}{p},\frac{1}{q}\big)$ lies on the open line segment $OB$ or $AB$, we have
			\begin{align*}
				\|A^{\alpha}_{lac}f\|_{L^{q,\infty}(\mathbb{R}^n)}\lesssim \|f\|_{L^{p,1}(\mathbb{R}^n)}.
			\end{align*}
		\end{theorem}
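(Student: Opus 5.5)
Our plan is to prove the restricted weak-type estimates on the open segments $OB$ and $AB$ first; the strong-type bounds in the interior of $OAB$ then follow by real interpolation. On $OA$ we have $\alpha=0$, so the strong bound there is the Coifman--Weiss/Calder\'on estimate \eqref{Calderon}. The basic device is a Littlewood--Paley type decomposition of the sphere combined with the single-scale $L^p$ improving estimates for $A_1$. Write $\sigma=\sum_{j\ge0}\sigma^j$, where $\widehat{\sigma^0}$ is supported in $|\xi|\le 2$ and $\widehat{\sigma^j}=\widehat\sigma\,\psi(2^{-j}\cdot)$ for $j\ge1$. For the pieces we record the standard bounds:
\begin{align*}
\|f*\sigma^j\|_{L^1\to L^\infty}\lesssim 2^{j}, \qquad \|f*\sigma^j\|_{L^2\to L^2}\lesssim 2^{-j(n-1)/2}, \qquad \|f*\sigma^j\|_{L^1\to L^1}\lesssim 1 .
\end{align*}
The first holds because $|\sigma^j|\lesssim 2^j$ on a $2^{-j}$-neighbourhood of $S^{n-1}$, and the second by the Fourier decay of $\widehat\sigma$.

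At a fixed scale $2^k$, dilation gives
\begin{align*}
\|2^{k\alpha} f*\sigma^j_{2^k}\|_{L^p\to L^q}=2^{k(\alpha-n(1/p-1/q))}\|f*\sigma^j\|_{L^p\to L^q}=\|f*\sigma^j\|_{L^p\to L^q},
\end{align*}
by the relation $\alpha/n=1/p-1/q$. To control the supremum over $k$, we bound it by an $\ell^{r}$ sum and exploit the fact that $q>p$. Concretely, for the piece $j$ we use $\sup_k|F_k|\le(\sum_k|F_k|^q)^{1/q}$ and Minkowski to pass from $\ell^p$ to $\ell^q$:
\begin{align*}
\Big\|\Big(\sum_k|2^{k\alpha}f*\sigma^j_{2^k}|^q\Big)^{1/q}\Big\|_{L^q}^q=\sum_k\|2^{k\alpha}f*\sigma^j_{2^k}\|_q^q .
\end{align*}
To sum over $k$, we localise $f$ in frequency or space. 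The simplest route is to note that for $j\ge1$ the operator $f\mapsto f*\sigma^j_{2^k}$ only sees the frequency annulus $|\xi|\sim 2^{j-k}$. Hence $f*\sigma^j_{2^k}=(P_{j-k}f)*\sigma^j_{2^k}$, and
\begin{align*}
\sum_k\|P_{j-k}f\|_p^q\le\Big(\sum_k\|P_{j-k}f\|_p^p\Big)^{q/p}\lesssim\|f\|_p^q \quad\text{for } p\le2,
\end{align*}
by Littlewood--Paley theory. For $p>2$ we argue by duality or interpolation instead. The $j=0$ piece is dominated by the fractional maximal function $\sup_t t^\alpha|f|*\phi_t$, which is bounded pointwise by $I_\alpha|f|$ and is therefore $L^p\to L^q$ bounded by Hardy--Littlewood--Sobolev when $\alpha>0$.

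This yields, for each $j$, a bound for the lacunary operator on piece $j$ from $L^p$ to $L^q$ of size $\|f*\sigma^j\|_{L^p\to L^q}$ (up to constants, at least in suitable ranges). We interpolate the three single-scale estimates to get $\lesssim 2^{j\beta(p,q)}$. Here $\beta<0$ exactly in the open triangle $OAB$: the $L^1\to L^\infty$ and $L^2$ bounds give a zero exponent along the line through $B$ and the $L^{(n+1)/n}\to L^{n+1}$ endpoint. On the open edges $OB$ and $AB$ we have $\beta=0$, but there we apply Bourgain's interpolation trick. It gives two bounds $2^{j\beta_0}$ and $2^{j\beta_1}$ with $\beta_0<0<\beta_1$ at nearby points $(1/p_i,1/q_i)$ on either side of the edge, for the operators $T_j$, and concludes that $\sum_jT_j$ is restricted weak type at the edge point. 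The edge $OB$ is handled by interpolating between the $L^\infty\to L^\infty$ bound ($\beta=0$ at $O$, but summable after combining with the $L^1\to L^\infty$ bound $2^j$ and the $L^2$ decay) and the endpoint $B$. The edge $AB$ is handled by interpolating between $A$ (the $L^1\to L^1$ bound) and $B$.

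The main obstacle is the summation in $k$ at fixed $j$ for all exponents, especially near $A$, where $p$ is close to $1$ and Littlewood--Paley pieces are unavailable. For $p\le 2$ we use the frequency localisation argument above, which relies on $\ell^p\subset\ell^q$. Near $A$ we instead use a spatial argument: $\sigma^j_{2^k}$ is essentially supported at scale $2^k$, and $q>p$, so a Calder\'on--Zygmund style vector-valued estimate (or Christ's $L\log L$-type method) should give $\ell^q$-summability with only a $j^{C}$ loss, which the exponential factor $2^{j\beta}$ absorbs. If this step fails, the fallback is to prove the $OB$ edge directly and obtain $AB$ by a duality-type argument adapted to the supremum through linearisation.
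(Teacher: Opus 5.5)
Your summation over $k$ breaks down near $A$, which is where the theorem is hardest. The Littlewood--Paley inequality you invoke runs the wrong way. The bound $(\sum_k\|P_{j-k}f\|_p^p)^{1/p}\lesssim\|f\|_p$ holds for $p\ge 2$. For $p\le 2$ one only has $(\sum_k\|P_{j-k}f\|_p^2)^{1/2}\lesssim\|f\|_p$, from the square function and reverse Minkowski in $L^{p/2}$. So dominating $\sup_k$ by the $\ell^q$ sum, and then using only the single-scale norms $\|\sigma^j\|_{L^p\to L^q}$, works precisely when $q\ge\max(p,2)$.

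It cannot be repaired for $q<2$. Take $f=\phi\sum_{k=1}^N e^{i\lambda_k x_1}$ with lacunary $\lambda_k\approx 2^{j-k}$ and $\widehat\phi$ supported in a small ball. Then each $\|P_{j-k}f\|_p\approx\|\phi\|_p$, while $\|f\|_p\approx N^{1/2}\|\phi\|_p$ by Zygmund's theorem on lacunary series. On the diagonal $p=q<2$, the same example makes $\sum_k\|f*\sigma^j_{2^k}\|_q^q$ larger than $\|f\|_q^q$ by a factor of order $N^{1-q/2}$ for fixed $j$. So no version of $\ell^q$-summability with only a $j^C$ loss exists there.

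With the $\ell^2$ correction, your route does give the sharp rates $2^{j\beta(p,q)}$ wherever $1/q\le 1/2$. This covers all of $OB$ and the part of $AB$ with $\alpha>(n-1)/2$. On that range it is a legitimate and somewhat more elementary alternative to the paper's complex interpolation in $\alpha$; it is the device the paper itself uses in Lemma \ref{full:alpha}(2).

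For $0<\alpha\le (n-1)/2$, however, the point of $AB$ has $q\le 2$. Lemma \ref{Bourgain} needs both estimates for the same operator $M_j^\alpha$, so its growth estimate must come from a point beyond $AB$ on the line $\frac1p-\frac1q=\frac{\alpha}{n}$, and every such point has $q<2$. That part of $AB$ is therefore not proved. Neither are the interior points with $q<2$, which you planned to obtain from the edges by real interpolation.

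What is missing is a sharp, loss-free bound for the lacunary pieces when $q<2$. The paper gets it from Lemma \ref{decay:lac}: $\|M_j^0f\|_{L^p}\lesssim 2^{-j(n-1)/p'}\|f\|_{L^p}$ for $1<p<2$, via a vector-valued bootstrap. Complex interpolation of \eqref{claim2} with \eqref{conjugatediagonal}, using the linearization of Lemma \ref{pointwise_approximation}, then gives the growth estimate \eqref{growth:AB} on $AB_1$.

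A Calder\'on--Zygmund argument with a $j^C$ loss, as you suggest, is harmless at interior points where $\beta<0$, but not on the edge. Lemma \ref{Bourgain} needs bounds of the form $C2^{\epsilon_1 j}$ and $C2^{-\epsilon_2 j}$ whose exponents interpolate to zero exactly at the edge point. Absorbing $j^C$ into an exponent moves the output point strictly inside the triangle; keeping it only gives a logarithmically weaker estimate.

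The duality fallback fails as well, because the supremum is not self-dual. The adjoint of a linearization is essentially $g\mapsto\sum_k 2^{k\alpha}\sigma^j_{2^k}*(g\chi_{E_k})$ for a measurable partition $\{E_k\}$. Its $L^{q'}\to L^{p'}$ bound at the dual point on $OB$ is a different statement from the maximal estimate you would have proved there.

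Finally, ``interpolating between $O$ and $B$'' or ``between $A$ and $B$'' is not how Lemma \ref{Bourgain} is used, and it would require an estimate at the corner $B$, which you do not have. The two estimates must lie on either side of the edge on the line $\frac1p-\frac1q=\frac{\alpha}{n}$, as the paper's segments $B_0B$, $OB_1$ and $AB_1$ do.
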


		Our results establish restricted weak-type estimates on the portions of the boundary (up to the endpoints) that are not ruled out by Stein's counterexample.  This may prompt one to wonder whether weak-type estimates or even strong-type estimates hold on the boundary, save the segment $PQ$ in Figure \ref{Fig:full} (where Stein's counterexample shows no improvement can be made).  For the lacunary case, on the segment $AB$, no strong-type estimate is possible, as the following example shows.
		\begin{example}
			Let $f(x)=\chi_{B(0,10)}(x)$, then $f\in L^p(\R^n)$ for all $p\geq1$. Fix $l\geq100$, then for $2^l-1\leq |x|\leq 2^l+1$, we have
			\begin{align*}
			A^{\alpha}_{lac}f(x)\gtrsim2^{-l(n-1-\alpha)}.
			\end{align*}
			Thus,
			\begin{align*}
				\int_{\R^n}|A^{\alpha}_{lac}f(x)|^q\;dx\gtrsim \sum_{l\geq100}2^{-l(n-1-\alpha)q}2^{l(n-1)}=2^{l(n-1-(n-1-\alpha)q)}
			\end{align*}
			If $A^{\alpha}_{lac}f\in L^q(\R^n)$ then $(n-1-\alpha)q>n-1$. This shows that $L^p(\R^n)\to L^q(\R^n)-$ bound does not hold on the line segment $AB$.
		\end{example}	
A more careful application of a complex interpolation result of Sagher yields a slight improvement on the segments $OR$, $OB$, that $A^{\alpha}_{*},A^{\alpha}_{lac}$ map $L^{p,r}$ to $L^{q,\infty}$ for some $r=r(p,q)>1$.  However, this falls short of the weak-type estimate and it remains an interesting open problem whether the estimates can be further improved on the remaining portions of the boundary, though this seems a hard question.  In particular, even weak-type boundedness of the fractional spherical maximal operator would imply weak-type bounds for the local spherical maximal function.  By interpolation, this in turn would imply strong-type bounds away from the corners, estimates which have been established by Lee in \cite{Lee1}, where the argument is highly non-trivial.
		
		One can contrast the type set for the fractional spherical and lacunary fractional spherical maximal functions with that of the fractional maximal function, defined for $\alpha \in [0,n]$ by
		\begin{align}\label{fractionalmaximal}
			\mathcal{M}^\alpha f(x)= \sup_{r>0} r^\alpha \fint_{B(x,r)} |f(y)|\;dy,
		\end{align}
		and whose full range of validity of estimates are depicted in the following figure.
		\begin{figure}[H]
			\centering
			\begin{tikzpicture}[scale=5]
				\tiny
				\fill[lightgray] (0,0)--(1,0)--(1,1)--cycle;
				\draw[thin][->]  (0,0)node[left]{$O$} --(1.15,0) node[right]{$\frac{1}{p}$};
				\draw[thin][->]  (0,0) --(0,1.2) node[left]{$\frac{1}{q}$};
				\draw[densely dotted] (0,1)node[left]{$(0,1)$}--(1,1)node[right]{$(1,1)$};
				\draw [ dotted] (1,0)node[below]{$(1,0)$} --(1,1);
				\draw[densely dotted] (1,0)--(1,1)--(0,0)--cycle;
				\draw[thin] (0,0) --(1,1);
			\end{tikzpicture}
			\caption{The figure denotes the regions of $L^p(\R^n)\to L^q(\R^n)$ boundedness of $\mathcal{M}^{\alpha}$.}
			\label{Fig:fractional}
		\end{figure}
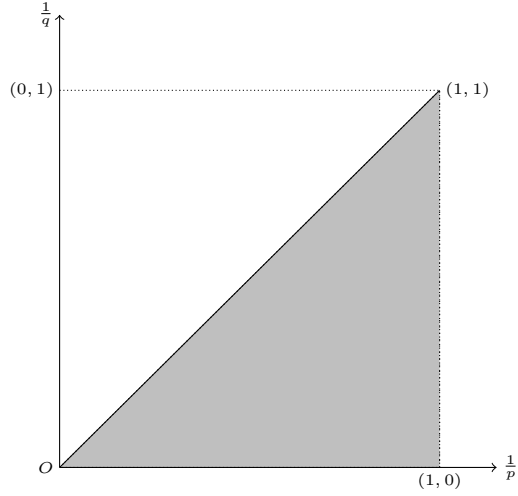
		\noindent
		In particular, $\mathcal{M}^\alpha$ maps $L^p(\R^n)\to L^q(\R^n)$ in the interior of the triangle, $L^1(\R^n)$ to weak-$L^{n/(n-\alpha)}(\R^n)$ on the right edge with failure of the strong-type estimate,  and $L^{n/\alpha}(\R^n)$ (and even weak-$L^{n/ \alpha}(\R^n)$) into $L^\infty(\R^n)$.  The comparison is only cursory, as the two operators are vastly different in character.  The introduction of the fractional maximal operator will, however, be useful in the sequel in the proofs.

		The plan of the paper is as follows.  In Section \ref{preliminaries} we introduce some requisite preliminaries.  In Section \ref{lacunary}, the prove Theorem \ref{thm:lac}.  In Section \ref{full}, we prove Theorem \ref{thm:full}.  In Section \ref{full2} we prove Theorem \ref{thm:full2}.
		
		\section{Preliminaries}\label{preliminaries}
		Let $\phi, \psi \in \mathscr{S}(\mathbb{R}^n)$ be a functions such that $\supp(\widehat{\phi})\subset B(0,1)$ and $\widehat{\psi}(\xi)=1$ on $B(0,2)\setminus B(0,1/2)$ with
		\begin{align}\label{identity}
			\widehat{\phi}(\xi)
			+ \sum_{j\geq1} \widehat{\psi}_{2^j}(\xi)=1, \quad \xi\neq 0.
		\end{align}
		where $\widehat{\psi}_t(\xi)=\widehat{\psi}\big(\frac{\xi}{t}\big)$. 
		
		Using the partition of unity above, we can decompose $A_{lac}^{\alpha}$ as follows.
		\begin{align*}
			A_{lac}^{\alpha}f(x)&=\sup_{k\in\Z}2^{k\alpha}\big|f\ast\phi_{2^{-k}}\ast\sigma_{2^k}(x)\big|+\sum_{j\geq1}\sup_{k\in\Z}2^{k\alpha}\big|f\ast\psi_{2^{j-k}}\ast\sigma_{2^k}(x)\big|\\
			&=\sum_{j\geq0}M_j^{\alpha}f(x),
		\end{align*}
		where
			\begin{align}\label{lacunary_LP_maximal_0}
	M_0^{\alpha}f(x)=\sup_{k\in\Z}2^{k\alpha}\big|f\ast\phi_{2^{-k}}\ast\sigma_{2^k}(x)\big|,
	\end{align}
and for $j\geq1$
		\begin{align}\label{lacunary_LP_maximal}
		M_j^{\alpha}f(x)=\sup_{k\in\Z}2^{k\alpha}\big|f\ast\psi_{2^{j-k}}\ast\sigma_{2^k}(x)\big|.
		\end{align}

Similarly, we have the following decomposition of $A_{*}^{\alpha}$.
		\begin{align*}
			A_{*}^{\alpha}f(x)&=\sup_{k\in\Z}\sup_{t\in[2^{k},2^{k+1})}\big|A_{t}^{\alpha}f(x)\big|\\
			&\leq\sup_{k\in\Z}\sup_{t\in[2^{k},2^{k+1})}t^{\alpha}\big|f\ast\phi_{2^k}\ast\sigma_{t}(x)\big|+\sum_{j\geq1}\sup_{k\in\Z}\sup_{t\in[2^{k},2^{k+1})}t^{\alpha}\big|f\ast\psi_{2^{j-k}}\ast\sigma_{t}(x)\big|\\
			&=:\sum_{j\geq0}M_{*,j}^{\alpha}f(x),
		\end{align*}
		where 
		\begin{align}\label{LP_maximal_0}
		M_{*,0}^{\alpha}f(x)=\sup_{k\in\Z}\sup_{t\in[2^{k},2^{k+1})}t^{\alpha}\big|f\ast\phi_{2^k}\ast\sigma_{t}(x)\big|,
		\end{align}
		and for $j\geq1$
\begin{align}\label{LP_maximal}              
M_{*,j}^{\alpha}f(x)&=\sup_{k\in\Z}M_{k,j}^{\alpha}f(x)=\sup_{k\in\Z}\sup_{t\in[2^{k},2^{k+1})}t^{\alpha}\big|f\ast\psi_{2^{j-k}}\ast\sigma_{t}(x)\big|.
\end{align}

Note that for $j\geq 1$, we can write  
         \begin{align}\label{max-equiv}
M_{*,j}^{\alpha}g(x)=\sup_{k\in\Z}\sup_{t\in[2^{k},2^{k+1})}t^{\alpha}\big|g\ast\psi_{2^{j-k}}\ast\sigma_{t}(x)\big|
= \sup_{t>0} t^{\alpha} \left|g\ast \psi_{2^{j}a(t)}\ast \sigma_{t}(x)\right|,
	\end{align}

where $a:[0,\infty)\rightarrow [0,\infty)$ is a piecewise continuous function defined by 
\begin{align*}
 a(t)= \sum_{k\in \mathbb{Z}} 2^{-k} \chi_{[2^k, 2^{k+1})(t)}.
 \end{align*}
		
		For complex interpolation of an analytic family of operators, it is useful to construct linearizations of the maximal functions \eqref{fmaximal}, \eqref{lacunary_fmaximal}, \eqref{lacunary_LP_maximal}, and \eqref{LP_maximal}.  We give a sketch of the proof for \eqref{LP_maximal}, following the argument in \cite[Lemma 3.3]{BasakSpector}.  The others can be argued analogously.
		\begin{lemma}\label{pointwise_approximation}
			Let $j\geq 1$ and $\alpha \in [0,n]$. Let $g$ be a measurable function such that the maximal function satisfies
						\begin{align*}
M_{*,j}^{\alpha}g(x)=\sup_{k\in\Z}\sup_{t\in[2^{k},2^{k+1})}t^{\alpha}\big|g\ast\psi_{2^{j-k}}\ast\sigma_{t}(x)\big|<+\infty.
			\end{align*}
for almost every $x \in \mathbb{R}^n$.  For each $\ell \in \mathbb{N}$, there exists a measurable function $t_\ell:\mathbb{R}^n \to [1/\ell,\ell]\cap \mathbb{Q}$ and a measurable function $\eta_\ell :\mathbb{R}^n \to \mathbb{R}$ such that for every integrable function $h$, the function
\begin{align}\label{measurable_approximation}
				U_{j,\ell}(h)(x):= \eta_\ell(x) t_{\ell}^{\alpha} \cdot  h\ast \psi_{2^j a(t_{\ell})} \ast \sigma_{t_{\ell}} 
			\end{align}
is measurable, satisfies the upper bound
\begin{align}\label{linearization_upper_bound}
				|U_{j,\ell}(h)(x)| \leq M_{*,j}^{\alpha}h(x),
			\end{align}
and
\begin{align}\label{convergence}
				M_{*,j}^{\alpha}g(x) = \lim_{\ell \to \infty} U_{j,\ell}(g)(x) 
			\end{align}
			for pointwise almost every $x \in \mathbb{R}^n$.
		\end{lemma}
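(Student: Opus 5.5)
The construction is a standard measurable-selection argument; \eqref{linearization_upper_bound} is then automatic because $|\eta_\ell|\le 1$ and $t_\ell$ is an admissible radius.

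\textbf{Step 1: reduce to countably many radii.} By \eqref{max-equiv}, $M_{*,j}^{\alpha}g(x)=\sup_{t>0} t^{\alpha}|g\ast\psi_{2^j a(t)}\ast\sigma_t(x)|$. Fix $x$. On each interval $[2^k,2^{k+1})$ the function $a$ is constant, equal to $2^{-k}$. There $t\mapsto t^{\alpha}\, g\ast\psi_{2^{j-k}}\ast\sigma_t(x)$ is continuous: the factor $g\ast\psi_{2^{j-k}}$ is smooth with bounded derivatives, at least when $g$ is integrable or of controlled growth, and $\sigma_t$ depends continuously on $t$ weakly. Hence the supremum over $t\in[2^k,2^{k+1})$ equals the supremum over $t\in[2^k,2^{k+1})\cap\mathbb{Q}$. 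Writing $(0,\infty)$ as the increasing union of the sets $[1/\ell,\ell]$, we get
\begin{align*}
M_{*,j}^{\alpha}g(x)=\lim_{\ell\to\infty} S_\ell(x),\qquad S_\ell(x):=\sup_{t\in[1/\ell,\ell]\cap\mathbb{Q}} t^{\alpha}\big|g\ast\psi_{2^j a(t)}\ast\sigma_t(x)\big|.
\end{align*}
This is a monotone limit. Each $S_\ell$ is measurable, being a countable supremum of measurable functions.

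\textbf{Step 2: choose the radius $t_\ell$.} Enumerate $[1/\ell,\ell]\cap\mathbb{Q}=\{q_1,q_2,\dots\}$ and set $F_m(x)=q_m^{\alpha}|g\ast\psi_{2^ja(q_m)}\ast\sigma_{q_m}(x)|$. At points where $M_{*,j}^{\alpha}g(x)<\infty$, which is almost every $x$ by hypothesis, $S_\ell(x)$ is finite. Let $m_\ell(x)$ be the least $m$ with
\[
F_m(x)>S_\ell(x)-1/\ell\quad\text{when } S_\ell(x)>0,
\]
and $m_\ell(x)=1$ otherwise. Each set $\{m_\ell=m\}$ is a finite Boolean combination of sets of the form $\{F_i>S_\ell-1/\ell\}$, so $m_\ell$ is measurable. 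Define $t_\ell(x)=q_{m_\ell(x)}$, which is measurable and takes values in $[1/\ell,\ell]\cap\mathbb{Q}$. On the null set where $S_\ell=\infty$, set $t_\ell=q_1$.

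\textbf{Step 3: choose the unimodular factor $\eta_\ell$.} Set $\eta_\ell(x)=\overline{\mathrm{sgn}}\big(g\ast\psi_{2^ja(t_\ell)}\ast\sigma_{t_\ell}(x)\big)$, meaning the complex conjugate of the sign. This is a measurable function of modulus at most $1$, real-valued if $g$ is real. Then $U_{j,\ell}(g)(x)=F_{m_\ell(x)}(x)$, which lies in $(S_\ell(x)-1/\ell,\,S_\ell(x)]$. Letting $\ell\to\infty$ and using Step 1 gives \eqref{convergence}.

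\textbf{Measurability for general $h$.} For each $m$, the function $h\ast\psi_{2^ja(q_m)}\ast\sigma_{q_m}$ is measurable, indeed continuous, for integrable $h$. Moreover
\[
U_{j,\ell}(h)=\sum_m \chi_{\{m_\ell=m\}}\,\eta_\ell\, q_m^{\alpha}\, h\ast\psi_{2^ja(q_m)}\ast\sigma_{q_m}.
\]
This is a countable sum of measurable functions over disjoint sets, so $U_{j,\ell}(h)$ is measurable. The bound $|U_{j,\ell}(h)(x)|\le M_{*,j}^{\alpha}h(x)$ is immediate, since $t_\ell(x)$ is one of the admissible radii and $|\eta_\ell|\le 1$.

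The main obstacle is Step 1: justifying that restricting to rational $t$ loses nothing when $g$ is merely measurable with $M_{*,j}^\alpha g<\infty$ a.e. The point is that $a(t)$ jumps at dyadic points and is right-continuous there. Since each half-open interval $[2^k,2^{k+1})$ has rational left endpoint and the map is continuous on it, density of the rationals suffices, provided $g\ast\psi_{2^{j-k}}$ is well defined and continuous. Finiteness of the maximal function guarantees this well-definedness. If that becomes delicate, I would instead define $M_{*,j}^\alpha$ via rational $t$ from the outset, or truncate $g$, as in \cite[Lemma 3.3]{BasakSpector}.
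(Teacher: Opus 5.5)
Your proposal is correct and follows essentially the same route as the paper: truncate to the countable set $[1/\ell,\ell]\cap\mathbb{Q}$, pick the first enumerated radius that nearly attains the truncated supremum, and attach a sign factor $\eta_\ell$; the upper bound and measurability then follow from $t_\ell$ taking only countably many values. The differences are minor: you use an additive margin $1/\ell$ where the paper uses the factor $1-1/\ell$, you spell out why restricting to rational radii loses nothing (continuity in $t$ on each dyadic block where $a$ is constant), and you handle the sets $\{S_\ell=0\}$ and $\{S_\ell=\infty\}$, all points the paper leaves implicit.
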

        \begin{proof}
         First, using \eqref{max-equiv}, we write

         \begin{align*}
M_{*,j}^{\alpha}g(x)=\sup_{t>0} t^{\alpha} \left|g\ast \psi_{2^{j}a(t)}\ast \sigma_{t}(x)\right|.
         \end{align*}
For $\ell \in \mathbb{N}$, we define $\Lambda_{\ell}=[1/\ell, \ell]\cap \mathbb{Q}$ and the truncated maximal function 

\begin{align*}
   M_{*,j,\ell}^{\alpha} g(x)=\sup_{t\in \Lambda_{\ell}} t^{\alpha} \left|g\ast \psi_{2^{j}a(t)}\ast \sigma_{t}(x)\right|. 
\end{align*}

Let $\{t^{i}_{\ell}\}$ be an enumeration of $\Lambda_{\ell}$. We define $t_{\ell}(x)=t_{\ell}^{1}$ if 
\begin{align*}
 (t_{\ell}^{1})^{\alpha} \left| g\ast \psi_{2^j a(t_{\ell}^1)} \ast \sigma_{t_{\ell}^1}\right| > \left(1-\frac{1}{\ell} \right)   M_{*,j,\ell}^{\alpha} g(x).
\end{align*}

It is easy to see that the set of all such $x$ for which $t_{\ell}(x)=t_{\ell}^{1}$ is measurable and we denote $\Omega_{\ell}^{1}$ be the set of all such $x$ selected in this step.

Next choose all those $x\notin \Omega_{\ell}^{1}$ such that 
\begin{align*}
 (t_{\ell}^{2})^{\alpha} \left| g\ast \psi_{2^j a(t_{\ell}^2)} \ast \sigma_{t_{\ell}^2}\right| > \left(1-\frac{1}{\ell} \right)   M_{*,j,\ell}^{\alpha} g(x),
\end{align*}
and inductively, we define $t_{\ell}^{k}$ be the smallest integer $k$ such that 
\begin{align}\label{induction}
    (t_{\ell}^{k})^{\alpha} \left| g\ast \psi_{2^j a(t_{\ell}^k)} \ast \sigma_{t_{\ell}^k}\right| > \left(1-\frac{1}{\ell} \right)   M_{*,j,\ell}^{\alpha} g(x).
\end{align}
Further define $\Omega_{\ell}^{k}$ be all such $x$ selected in this step. 

The function $t_{\ell}(x)$ can be expressed as
\begin{align*}
    t_{\ell}(x)=\sum_{k=1}^{\infty} t_{\ell}^{k} \chi_{\Omega_{\ell}^{k}}(x),
\end{align*}
and in particular, as it take on only countably many values, it is a measurable function of $x$. 

The measurability of the function $x\mapsto g\ast \psi_{2^j a(t_{\ell})} \ast \sigma_{t_{\ell}}$ implies measurability of the function 
 \begin{align*}
     \eta_{\ell}(x)= \left\{
     \begin{array}{ll}
           \frac{g\ast \psi_{2^j a(t_{\ell})} \ast \sigma_{t_{\ell}}}{\left| g\ast \psi_{2^j a(t_{\ell})} \ast \sigma_{t_{\ell}}\right|} & \quad \text{if} \left| g\ast \psi_{2^j a(t_{\ell})} \ast \sigma_{t_{\ell}}\right|\neq 0,\\
           0 &\quad \text{otherwise}.\\
     \end{array}
     \right.
\end{align*}
Therefore, the function defined in \eqref{measurable_approximation} is a measurable function.

The upper bound \eqref{linearization_upper_bound} follows easily from the definition of $U_{j,\ell}$, while the convergence \eqref{convergence} follows from the fact 
\begin{align*}
      M_{*,j}^{\alpha} g(x)= \lim_{\ell \rightarrow \infty}   M_{*,j,\ell}^{\alpha} g(x)
\end{align*}
and \eqref{induction}.

        \end{proof}
		
	We will make liberal use of complex interpolation in this paper, including standard interpolations between Lebesgue spaces, as well as complex interpolation with a restricted weak-type estimate.  A convenient statement of hypothesis that includes these various scenarios can be found in the paper of Sagher \cite[p.~351]{Sagher}, whose result we next recall.
		\begin{lemma}\label{complexinter}
			Let $S$ denote the closed strip $\{z\in \mathbb{C}: 0\leq Re(z)\leq 1\}$ and suppose $\{T_z\}_{z \in S}$ is an analytic family of linear operators such that
			\begin{align*}
				\|T_{it} f\|_{L^{q_0,s_0}(\mathbb{R}^n)} &\leq C_0(t)\|f\|_{L^{p_0,r_0}(\mathbb{R}^n)},\\
				\|T_{1+it} f\|_{L^{q_1,s_1}(\mathbb{R}^n)} &\leq C_1(t)\|f\|_{L^{p_1,r_1}(\mathbb{R}^n)},
			\end{align*}
for all simple functions $f$, where $\log C_i(t) \leq C exp(a|t|)$ for some $a<\pi$.  Then for $\theta \in (0,1)$,
			\begin{align*}
				\frac{1}{q} &= \frac{\theta}{q_0} + \frac{1-\theta}{q_1}, \qquad \frac{1}{s} = \frac{\theta}{s_0} + \frac{1-\theta}{s_1}, \\
				\frac{1}{p} &= \frac{\theta}{p_0} + \frac{1-\theta}{p_1}, \qquad \frac{1}{r} = \frac{\theta}{r_0}+ \frac{1-\theta}{r_1},
\end{align*}
we have for all simple functions
\begin{align*}
\|T_{\theta} f\|_{L^{q,s}(\mathbb{R}^n)} \leq C_\theta\|f\|_{L^{p,r}(\mathbb{R}^n)}
\end{align*}
where 
\begin{align*}
\log C_\theta \lesssim \int_{-\infty}^\infty \frac{\log C_0(t)}{\cosh(\pi t)- \cos(\pi \theta)} \;dt +\int_{-\infty}^\infty \frac{\log C_1(t) }{\cosh(\pi t)+ \cos(\pi \theta)}\;dt.
\end{align*}
\end{lemma}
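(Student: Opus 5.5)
This statement is Sagher's theorem, quoted from \cite[p.~351]{Sagher}, so we will not reprove it from scratch; instead we indicate how the standard argument runs, since we use it as a black box. The plan follows the Stein analytic-interpolation scheme, adapted to Lorentz spaces via duality and a factorization of simple functions.

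\textbf{Step 1: reduction to a bilinear form.} Since $L^{q,s}$ for $q>1$ is normable and dual to $L^{q',s'}$ (with the usual modification when $s=\infty$), it suffices to bound
\[
\Big|\int T_\theta f\, g\,dx\Big| \le C_\theta \|f\|_{L^{p,r}} \|g\|_{L^{q',s'}}
\]
for simple $f,g$ normalized in their norms.

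\textbf{Step 2: analytic extension of $f$ and $g$.} Write the nonincreasing rearrangement of a simple $f$ through its level sets, $f=\sum_k c_k\chi_{E_k}$ with the $E_k$ nested. We define $f_z$ by replacing $|c_k|$ and the measures $|E_k|$ by complex powers chosen so that:
\begin{itemize}
\item $f_\theta=f$;
\item $\|f_{it}\|_{L^{p_0,r_0}}\lesssim 1$ and $\|f_{1+it}\|_{L^{p_1,r_1}}\lesssim 1$.
\end{itemize}
This is the Lorentz-space analogue of $|f|^{p(1-z)/p_0+pz/p_1}$. Sagher's device is to use the discrete characterization of the Lorentz norm,
\[
\|f\|_{L^{p,r}}\approx \Big\|2^{k/p}f^*(2^k)\Big\|_{\ell^r},
\]
and to interpolate the weighted sequence spaces with two parameters. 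We construct $g_z$ the same way.

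\textbf{Step 3: applying the three-lines lemma.} Set $F(z)=\int T_zf_z\,g_z$. This function is analytic and of admissible growth, because $T_z$ is an analytic family and $f_z,g_z$ are finite sums of exponentials in $z$. The bound $\log C_i(t)\le Ce^{a|t|}$ with $a<\pi$ is exactly the hypothesis of the Hirschman refinement of the three-lines lemma. That refinement yields
\[
\log|F(\theta)|\le \int_{-\infty}^{\infty} \frac{\log|F(it)|}{\cosh(\pi t)-\cos(\pi\theta)}\,dt+\int_{-\infty}^{\infty} \frac{\log|F(1+it)|}{\cosh(\pi t)+\cos(\pi\theta)}\,dt,
\]
up to the harmonic-measure normalization. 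Inserting the endpoint bounds $|F(j+it)|\le C_j(t)\,\|f_{j+it}\|\,\|g_{j+it}\|\lesssim C_j(t)$ gives the stated estimate for $C_\theta$.

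\textbf{Main obstacle.} The delicate step is Step 2. One must make the second Lorentz index $r$ interpolate correctly, since that index is not controlled by the simple power substitution used for Lebesgue spaces. The same issue arises for endpoint cases such as $r_i=1$ and $s_i=\infty$, which are precisely the restricted weak-type cases used later. Sagher handles this by working with the discrete $\ell^r$-valued decomposition above, where complex interpolation of the weighted $\ell^r$ spaces is classical. We will simply cite that result.
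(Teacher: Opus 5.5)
Like the paper, which gives no argument beyond citing \cite{Sagher} and remarking that the Stein--Weiss proof adapts, you treat the lemma as a black box from Sagher's paper, so the approach is essentially the same, and your sketch is acceptable as motivation. One point you should flag explicitly rather than correct silently: your normalization $f_\theta=f$, via the analogue of $|f|^{p(1-z)/p_0+pz/p_1}$, forces $\frac1p=\frac{1-\theta}{p_0}+\frac{\theta}{p_1}$ (and likewise for $q,r,s$), which is the correct convention and the one matching the stated formula for $\log C_\theta$, whereas the printed relations interchange $\theta$ and $1-\theta$ and are false as written for $\theta\neq\frac12$ (e.g.\ for $T_z=(-\Delta)^{-\gamma z/2}$ with $p_0=q_0=2$, scaling forces $\frac1p-\frac1q=\frac{\gamma\theta}{n}$, not $\frac{\gamma(1-\theta)}{n}$).
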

\noindent
In fact, the underlying ideas in the more standard reference \cite[Theorem 4.1 on p.~205]{SteinWeiss} are sufficient for our purposes.  However, in this case one should observe the interpolation of a restricted weak-type estimate with an estimate into the space of bounded functions yields an interpolated restricted weak-type estimate.	
		
In addition to complex interpolation, we also make use of Bourgain's interpolation argument to prove the restricted weak-type estimates for $A_{lac}^{\alpha} .$  We state the interpolation lemma for convenience. The reader is referred to \cite{Lee1} for more details. 
		\begin{lemma}[\cite{Lee1}, Lemma 2.6]\label{Bourgain}
			Let $\epsilon_1,\epsilon_2>0$. Suppose that $\{T_j\}$ is a sequence of linear (or sublinear) operators 
			such that for some $1\leq p_1,p_2<\infty$, and $1\leq q_1,q_2<\infty$, 
			$$\Vert T_{j}(f)\Vert_{L^{q_1}(\R^n)}\leq M_12^{\epsilon_1 j}\Vert f\Vert_{L^{p_1}(\R^n)},~~\Vert T_{j}(f)\Vert_{L^{q_2}(\R^n)}\leq M_22^{-\epsilon_2 j}\Vert f\Vert_{L^{p_2}(\R^n)}.$$
			Then $T=\sum_jT_j$ is bounded from $L^{p,1}(\R^n)$ to $L^{q,\infty}(\R^n)$ with 
			$$\Vert T(f)\Vert_{L^{q,\infty}(\R^n)}\lesssim M^{\theta}_{1}M^{1-\theta}_{2}\Vert f\Vert_{L^{p,1}(\R^n)},$$	
			where $\theta=\frac{\epsilon_2}{\epsilon_1+\epsilon_2}$, $\frac{1}{q}=\frac{\theta}{q_1}+\frac{1-\theta}{q_2}$ 
			and $\frac{1}{p}=\frac{\theta}{p_1}+\frac{1-\theta}{p_2}$.
		\end{lemma}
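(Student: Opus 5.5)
The plan is the standard ``Bourgain summation trick'': prove the restricted weak-type inequality on characteristic functions, optimizing a splitting of the sum over $j$ at a threshold depending on the sets, and then pass to general $f\in L^{p,1}(\R^n)$. Throughout I interpret $T=\sum_j T_j$ (in the sublinear case) as $|Tf|\le \sum_j |T_jf|$ pointwise. Note that $\theta\in(0,1)$, so $p,q$ lie strictly between the two endpoint exponents.

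\textbf{Step 1 (reduction).} Fix a measurable set $E$ of finite measure and $\lambda>0$, and let $F=\{x: |T\chi_E(x)|>\lambda\}$. I may assume $|F|<\infty$, by intersecting with large balls and letting them exhaust $\R^n$. The aim is
\begin{align*}
|F|^{1/q}\lesssim M_1^{\theta}M_2^{1-\theta}\,\frac{|E|^{1/p}}{\lambda}.
\end{align*}
By Chebyshev and Hölder,
\begin{align*}
\lambda|F|\le \int_F |T\chi_E| \le \sum_j \int_F |T_j\chi_E| \le \sum_j \|T_j\chi_E\|_{L^{q_i}}|F|^{1-1/q_i},
\end{align*}
where I may choose $i\in\{1,2\}$ separately for each $j$.

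\textbf{Step 2 (splitting and optimization).} Fix an integer $J$ to be chosen. For $j\le J$ I use the first hypothesis, and for $j>J$ the second; the geometric sums converge since $\epsilon_1,\epsilon_2>0$. This gives
\begin{align*}
\lambda|F|\lesssim 2^{\epsilon_1 J}X_1+2^{-\epsilon_2 J}X_2,\qquad X_i:=M_i|E|^{1/p_i}|F|^{1-1/q_i}.
\end{align*}
I then pick $J$ as the integer nearest to the real solution of $2^{J(\epsilon_1+\epsilon_2)}=X_2/X_1$. Rounding to an integer costs only a constant depending on $\epsilon_1,\epsilon_2$. With this choice the right-hand side is $\lesssim X_1^{\theta}X_2^{1-\theta}$ with $\theta=\epsilon_2/(\epsilon_1+\epsilon_2)$. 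Expanding,
\begin{align*}
X_1^\theta X_2^{1-\theta}=M_1^{\theta}M_2^{1-\theta}\,|E|^{1/p}\,|F|^{1-1/q},
\end{align*}
using the stated definitions of $p$ and $q$. Dividing by $\lambda|F|^{1-1/q}$ yields the claimed bound for $T\chi_E$.

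\textbf{Step 3 (from sets to $L^{p,1}$).} I would write a nonnegative simple $f$ as a layer-cake sum $f=\sum_k c_k\chi_{E_k}$ with $c_k>0$ and nested $E_k$. This satisfies $\sum_k c_k|E_k|^{1/p}\approx \|f\|_{L^{p,1}}$. Sublinearity gives $|Tf|\le\sum_k c_k|T\chi_{E_k}|$. Since $q>1$, the space $L^{q,\infty}$ admits an equivalent norm, so the triangle inequality applies to this sum with a constant. Complex-valued $f$ are handled by splitting into real and imaginary, positive and negative parts, and density finishes the argument.

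\textbf{Main obstacle.} The delicate point is Step 3. It needs $q>1$, which holds because $q$ is a strict convex combination of $q_1,q_2\ge1$, except in the degenerate case $q_1=q_2=1$. That case does not arise in the applications here, and if necessary I would simply exclude it. The optimization in Step 2 is routine bookkeeping once the pairing with $\chi_F$ is set up. I would take care that $|F|<\infty$, so that the division in Step 2 is legitimate.
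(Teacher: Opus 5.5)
Your proposal is correct and is essentially the standard proof: the paper gives no proof of this lemma and quotes it from Lee as Bourgain's interpolation argument, and your argument is exactly that argument in its dual (bilinear) form, pairing $\sum_j |T_j\chi_E|$ against a finite-measure subset of the level set and choosing the cutoff $J$ with $2^{J(\epsilon_1+\epsilon_2)}\approx X_2/X_1$. One small point: you need not exclude $q_1=q_2=1$, because Step 2 works verbatim with $F$ replaced by any set $G$ of finite measure and gives $\int_G |T\chi_E| \lesssim M_1^{\theta}M_2^{1-\theta}|E|^{1/p}|G|^{1-1/q}$, which bounds the equivalent lattice norm $\sup_{G}|G|^{1/q-1}\int_G|g|$ of $L^{q,\infty}$ when $q>1$ and the $L^1$ norm when $q=1$, so the layer-cake summation in Step 3 goes through in every case.
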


		\section{Proof of Theorem \ref{thm:lac}}\label{lacunary}

		We begin with several lemmas for the operators $M_j^{\alpha}$ defined in \eqref{lacunary_LP_maximal_0} and \eqref{lacunary_LP_maximal}.
		\begin{lemma}\label{Pointwise:lac}
			Let $j\geq0$ and $0\leq\alpha\leq n$, then we have
			\[M_j^{\alpha}f(x)\lesssim 2^j\mathcal{M}^{\alpha}f(x).\]
		\end{lemma}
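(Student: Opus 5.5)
The plan is to bound the convolution kernel $\psi_{2^{j-k}}\ast\sigma_{2^k}$ (respectively $\phi_{2^{-k}}\ast\sigma_{2^k}$ when $j=0$) pointwise by a sum of $L^1$-normalized radial decreasing bumps at scales comparable to $2^k$. Each such bump, tested against $|f|$ and multiplied by $2^{k\alpha}$, is controlled by $\mathcal{M}^\alpha f(x)$. Fix $k$ and set $K_{j,k}=\psi_{2^{j-k}}\ast\sigma_{2^k}$, where $\psi_{2^{j-k}}(y)=2^{(j-k)n}\psi(2^{j-k}y)$ is the $L^1$-normalized dilate. This is an approximation to the identity at scale $2^{k-j}$ smoothing $\sigma_{2^k}$.

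Step 1: the pointwise kernel bound. Since $\psi$ is Schwartz, $|\psi_{2^{j-k}}(y)|\lesssim_N 2^{(j-k)n}(1+2^{j-k}|y|)^{-N}$. Convolving with $\sigma_{2^k}$, I would show
\begin{align*}
|K_{j,k}(y)|\lesssim_N 2^{-kn}\,2^{j}\,\big(1+2^{j-k}\big||y|-2^k\big|\big)^{-N}\quad\text{for } |y|\le 2^{k+2},
\end{align*}
with rapid decay $\lesssim 2^{-kn}2^{-jN}(2^{-k}|y|)^{-N}$ for $|y|\ge 2^{k+2}$. The factor $2^j$ comes from the fact that the mass of the sphere (total $1$) is spread over a shell of width $2^{k-j}$ and radius $2^k$, whose volume is about $2^{kn}2^{-j}$. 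This is the standard estimate for a mollified sphere: the $\sigma_{2^k}$-measure of a ball of radius $r\le 2^k$ is $\lesssim (r/2^k)^{n-1}$, and summing dyadic shells in the Schwartz tail gives the bound.

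Step 2: domination by averages. From Step 1, $|K_{j,k}|\lesssim 2^j\,2^{-kn}\chi_{B(0,2^{k+2})}+\sum_{m\ge 2}2^{-mN}2^{-(k+m)n}\chi_{B(0,2^{k+m})}$, absorbing harmless powers into $N$. Hence
\begin{align*}
2^{k\alpha}|f\ast K_{j,k}(x)|\lesssim 2^j\,2^{k\alpha}\fint_{B(x,2^{k+2})}|f|+\sum_{m\ge2}2^{-mN}2^{-m\alpha}(2^{k+m})^{\alpha}\fint_{B(x,2^{k+m})}|f|\lesssim 2^j\mathcal{M}^\alpha f(x).
\end{align*}
This uses $2^{k\alpha}\le (2^{k+m})^\alpha$ and the convergence of $\sum_m 2^{-mN}$. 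Taking the supremum over $k$ gives the claim for $j\ge1$. For $j=0$ the same argument with $\phi_{2^{-k}}$ in place of $\psi_{2^{-k}}$ gives $|K_{0,k}|\lesssim 2^{-kn}(1+2^{-k}|y|)^{-N}$ directly.

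The main obstacle is Step 1, where the factor must be exactly $2^j$ and no worse. The crude bound $\|\psi_{2^{j-k}}\|_\infty\,\|\sigma\|=2^{(j-k)n}$ is too weak. I would handle it by splitting $\sigma_{2^k}$ according to the distance of $z$ on the sphere to $y$ in dyadic annuli $|y-z|\sim 2^{k-j+i}$. The surface-measure estimate $\sigma_{2^k}(B(y,r))\lesssim (r2^{-k})^{n-1}$ then gives $\sum_i 2^{(j-k)n}2^{-iN}(2^{i-j})^{n-1}\lesssim 2^{j-kn}$, and the analogous sum with $|y|$ far from the sphere produces the decay factor.
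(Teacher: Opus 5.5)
Your proposal is correct and follows essentially the same route as the paper. Both arguments rest on the pointwise kernel bound $|\psi_{2^{j-k}}\ast\sigma_{2^k}(y)|\lesssim 2^j2^{-kn}(1+2^{-k}|y|)^{-N}$, followed by a dyadic decomposition in $|y|$ and control of each piece by $\mathcal{M}^\alpha f(x)$ using $2^{k\alpha}\le (2^{k+m})^{\alpha}$. The one addition on your side is Step 1, where you justify this kernel bound (which the paper simply asserts) through the surface-measure estimate $\sigma_{2^k}(B(y,r))\lesssim (r2^{-k})^{n-1}$.
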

		\begin{proof}
			For each $j$ and $k$, we have the following kernel estimate
			\[|\psi_{2^{j-k}}\ast\sigma_{2^k}(y)|\lesssim2^j2^{-kn}(1+2^{-k}|y|)^{-N},\quad\text{for }\quad N>n.\]
			Then,
			\begin{align*}
				&2^{k\alpha}|f\ast\psi_{2^{j-k}}\ast\sigma_{2^k}(x)|\\
				\lesssim& 2^j2^{k(\alpha-n)}\int_{\R^n}f(x-y)(1+2^{-k}|y|)^{-N}\;dy\\
				\leq&2^j2^{k(\alpha-n)}\int_{|y|\leq 2^{k}}f(x-y)\;dy+\sum_{l\geq 1}2^j2^{k(\alpha-n)}2^{-lN}\int_{2^{k+l-1}<|y|\leq 2^{k+l}}f(x-y)\;dy\\
				\lesssim&2^j\mathcal{M}^{\alpha} f(x)\bigg(1+\sum_{l\geq1}2^{-l(N-n+\alpha)}\bigg)\lesssim 2^j\mathcal{M}^{\alpha} f(x).
			\end{align*}
			Since, the above estimate is uniform in $k$, we obtain the desired bound
			\[M_j^{\alpha}f(x)\lesssim 2^j\mathcal{M}^{\alpha}f(x).\]        
		\end{proof}

		\begin{lemma}\label{decay:lac}
			Let $j\geq 1$ and $1\leq p\leq\infty$, then we have
			\[\|M_j^0f\|_{L^p(\R^n)}\lesssim2^{-j\frac{n-1}{\max\{p,p'\}}}\|f\|_{L^p(\R^n)}.\]
		\end{lemma}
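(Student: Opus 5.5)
We will prove Lemma \ref{decay:lac} by interpolating an $L^2$ estimate with $L^1\to L^{1,\infty}$ and $L^\infty$ estimates, using the structure of $M_j^0$ as a lacunary maximal function of a single frequency-localized kernel. Write $K_j=\psi_{2^j}\ast\sigma$, so that $\psi_{2^{j-k}}\ast\sigma_{2^k}=(K_j)_{2^k}$ is the $L^1$-dilate of $K_j$ by $2^k$, and $M_j^0f=\sup_k|f\ast (K_j)_{2^k}|$.

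\emph{$L^\infty$ and $L^1$ endpoints.} Since $\|K_j\|_{L^1}\lesssim 1$ uniformly in $j$ (as $\|\psi_{2^j}\|_{L^1}=\|\psi\|_{L^1}$ and $\sigma$ is a probability measure), we get $\|M_j^0f\|_{L^\infty}\lesssim\|f\|_{L^\infty}$ with no loss. For $p=1$ we use the kernel bound from Lemma \ref{Pointwise:lac}, $|(K_j)_{2^k}(y)|\lesssim 2^j2^{-kn}(1+2^{-k}|y|)^{-N}$, which gives $M_j^0f\lesssim 2^j\mathcal{M}f$. Hence $M_j^0$ is bounded on $L^p$, $1<p\le\infty$, with norm $\lesssim 2^j$, and this is the $p=1$ end of the claimed bound (as $\max\{p,p'\}=\infty$ there, the claim is $O(1)$ at $p=1,\infty$; at $p=1$ we read it as the weak-type or $L^{1+}$ analogue, or more precisely the interpolation only needs $2^{j\epsilon}$ losses near $p=1$). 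Actually, to get exactly $O(1)$ near $p=1$, I would instead use the stronger observation that $\sup_k|f\ast (K_j)_{2^k}|$ is dominated by a vector-valued Calderón–Zygmund operator whose kernel $\{(K_j)_{2^k}\}_k$ has $\ell^\infty$-valued Hörmander constant $\lesssim j$ (or $\lesssim 1+j$), giving $L^1\to L^{1,\infty}$ with only polynomial loss in $j$; interpolation with $L^2$ then gives $2^{-j(n-1)/p'}$ up to a harmless factor, and a sharper Hörmander-constant count should remove even that.

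\emph{$L^2$ estimate.} Dominate the supremum by the square function: $M_j^0f\le(\sum_k|f\ast(K_j)_{2^k}|^2)^{1/2}$. By Plancherel,
\begin{align*}
\Big\|\Big(\sum_k|f\ast(K_j)_{2^k}|^2\Big)^{1/2}\Big\|_{L^2}^2=\int|\widehat f(\xi)|^2\sum_k|\widehat\psi(2^{-j}2^k\xi)|^2|\widehat\sigma(2^k\xi)|^2\,d\xi .
\end{align*}
For each $\xi$ only $O(1)$ values of $k$ have $|2^k\xi|\sim 2^j$, and there $|\widehat\sigma(2^k\xi)|\lesssim 2^{-j(n-1)/2}$ by stationary phase. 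Thus $\|M_j^0f\|_{L^2}\lesssim 2^{-j(n-1)/2}\|f\|_{L^2}$, matching $\max\{p,p'\}=2$.

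\emph{Interpolation.} Interpolating (Marcinkiewicz for the sublinear operator, or via the linearization of Lemma \ref{pointwise_approximation} and Riesz–Thorin) between $L^2$ with norm $2^{-j(n-1)/2}$ and $L^\infty$ with norm $O(1)$ gives $2^{-j(n-1)/p}$ for $2\le p\le\infty$; between $L^2$ and the $p\to1$ endpoint gives $2^{-j(n-1)/p'}$ for $1<p\le2$. The main obstacle is the endpoint near $p=1$: the crude bound $2^j\mathcal{M}$ would only yield $2^{-j(n-1)/p'}\cdot 2^{j(\cdot)}$ losses, so the step requiring care is establishing an $L^1\to L^{1,\infty}$ (or $H^1\to L^1$) bound for $M_j^0$ with constant $O(j)$, via the Hörmander condition $\int_{|y|>2|z|}\sup_k|(K_j)_{2^k}(y-z)-(K_j)_{2^k}(y)|\,dy\lesssim j$, split according to whether $2^{k-j}\lesssim|z|$ (use size) or not (use the gradient bound $\|\nabla (K_j)_{2^k}\|_{L^1}\lesssim 2^{j-k}$). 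If a logarithmic loss remains, it is absorbed in applications, since later uses sum over $j$ with a geometric gain.
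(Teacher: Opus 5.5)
Your $L^2$ estimate (square function plus Plancherel) and the range $2\le p\le\infty$ (interpolation with the trivial $L^\infty$ bound) are exactly what the paper does, and they give the stated exponent with no loss. The gap is in the range $1<p<2$.

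\textbf{The loss in your route.} Marcinkiewicz interpolation between a weak-type $(1,1)$ bound with constant $A_1(j)$ and the $L^2$ bound $2^{-j\frac{n-1}{2}}$ gives $C_p\,A_1(j)^{\frac{2}{p}-1}\,2^{-j\frac{n-1}{p'}}$. So your route yields the lemma only if $A_1(j)=O(1)$, and the Calder\'on--Zygmund argument cannot supply that. Your H\"ormander count of $O(j)$ is sharp. For the roughly $j$ values of $k$ with $|z|\ll 2^k\ll 2^j|z|$, the shift $z$ exceeds the shell thickness $2^{k-j}$. Hence $(K_j)_{2^k}(\cdot-z)-(K_j)_{2^k}$ has $L^1$ mass $\gtrsim 1$, all of it in the annulus $\{|y|\approx 2^k\}$. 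These annuli are disjoint for different $k$, so $\int_{|y|>2|z|}\sup_k|(K_j)_{2^k}(y-z)-(K_j)_{2^k}(y)|\,dy\gtrsim j$. What you actually prove is therefore $\|M_j^0\|_{L^p\to L^p}\lesssim_p j^{\frac{2}{p}-1}2^{-j\frac{n-1}{p'}}$, and no sharper bookkeeping of the H\"ormander integral removes the factor.

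\textbf{Why the loss matters downstream.} Your claim that this loss is harmless is incorrect. The lemma at $p=r'<2$ is exactly \eqref{claim2}, which feeds \eqref{growth:AB} and then Lemma \ref{Bourgain} on the boundary segment $AB$. Bourgain's summation argument needs the exact rates $2^{\epsilon_1 j}$ and $2^{-\epsilon_2 j}$. An extra factor $j^{c}$ survives the splitting at the optimal $J$ as $J^{c}$, where $J$ depends on $\lambda$ and $|E|$. You would then get the restricted weak-type estimate only up to logarithms. Geometric gains absorb polynomial losses in the interior of $OAB$, not on its edges. (At $p=1$ the stated inequality is simply false, since $M_j^0$ is unbounded on $L^1$; the paper's argument also covers only $1<p\le\infty$, so your reinterpretation there is appropriate.)

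\textbf{The paper's route.} The paper avoids weak-type endpoints altogether. For $1<p<2$ it interpolates two bounds for the diagonal operator $\{g_k\}\mapsto\{\sigma_{2^k}\ast\psi_{2^{j-k}}\ast g_k\}$: an $L^1(\ell^1)$ bound of order $1$, since each kernel has bounded mass, and an $L^2(\ell^\infty)$ bound of order $2^{-j\frac{n-1}{2}}$. This gives $L^{4/3}(\ell^2)$ with $2^{-j\frac{n-1}{4}}$. It then passes to $M_j^0$ via $\ell^2\hookrightarrow\ell^\infty$ and Littlewood--Paley, and iterates. Since only strong-type bounds with purely exponential constants are interpolated, the exponent $\frac{n-1}{p'}$ comes out with no loss.

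Be aware, though, that the paper justifies the $L^2(\ell^\infty)$ input by the domination $\sup_k|A^0_{2^k,j}(\mathbf{f})|\le M_j^0(\sup_m|f\ast\psi_{m-j}|)$. That domination would require nonnegative kernels, whereas $\psi_{2^{j-k}}\ast\sigma_{2^k}$ changes sign. So that step needs an independent argument, and the sharp exponent for $1<p<2$ is more delicate than either your argument or the paper's suggests.
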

		
		\begin{proof}
			From the $\ell_2\hookrightarrow\ell_\infty$, we have
			\[M_j^{0}f(x)\leq \bigg(\sum_{k\in\Z}|f\ast\psi_{2^{j-k}}\ast\sigma_{2^k}(x)|^2\bigg)^{\frac{1}{2}}.\]
			Using the Plancherel theorem and $|\widehat{\sigma_{2^k}}(\xi)|\lesssim(1+2^k|\xi|)^{-\frac{n-1}{2}}$, we obtain that
			\begin{align*}
				\|M_j^{0}f\|_{L^2(\R^n)}&=\bigg(\sum_{k\in\Z}\|\widehat{f}\widehat{\psi}_{2^{j-k}}\widehat{\sigma}_{2^k}\|_{L^2(\R^n)}^2\bigg)^{\frac{1}{2}}\\
				&\lesssim 2^{-j\frac{n-1}{2}}\bigg(\sum_{k\in\Z}\|\widehat{f}\widehat{\psi}_{2^{j-k}}\|_{L^2(\R^n)}^2\bigg)^{\frac{1}{2}}\\
				&\lesssim 2^{-j\frac{n-1}{2}}\|f\|_{L^2(\R^n)}.
			\end{align*}
			This proves the desired bounds for $p=2$. When $p>2$, interpolating between the above $L^2(\R^n)$-estimate and trivial $L^\infty(\R^n)$ bound for lacunary spherical maximal function gives the desired decay for $L^p(\R^n)$-estimate. Now, we will use the bootstraping argument when $1\leq p<2$. Consider the following vector-valued operator $\vec{\bold{A}}_{j}$ acting on a sequence of measurable functions $\bold{f}=\{f*\psi_{k-j}\}_{k\in\Z}$,
			\[\vec{\bold{A}}_{j}\bold{f}=\{A_{2^k,j}^{0}(f*\psi_{k-j})\}_{k\in\Z},\]
			where $A_{2^k,j}^{0}f(x)=A_{2^k}^{0}(f\ast\psi_{k-j})(x)$.
			From the $L^1(\R^n)$-boundedness of $A_{2^k}^{0}$ and finite $L^1(\R^n)$ norm of $\psi_{k-j}$, we have
			\[\|\vec{\bold{A}}_{j}\|_{L^1(\R^n,\ell_1)\to L^1(\R^n,\ell_1)}\lesssim1.\]
			Next, using the $L^2$-boundedness of $M_{j}^0$, we get
			\begin{align*}
				\|\vec{\bold{A}}_{j}\bold{f}\|_{L^2(\R^n,\ell_\infty)}
				&= \left\|\sup_{k\in\Z}|A_{2^k,j}^{0}(\bold{f})|\right\|_{L^2(\R^n)}\\&\leq \left\|M_{j}^0(\sup_{m}|f*\psi_{m-j}|)\right\|_{L^2(\R^n)}\\
				&\lesssim {2^{-j{{n-1}\over2}}}\|\bold{f}\|_{L^2(\R^n,\ell_\infty)}.
			\end{align*}
			Thus, interpolating ${L^1(\R^n,\ell_1)}$ and ${L^2(\R^n,\ell_\infty)}$ bounds of $\vec{\bold{A}}_{j}$, we get that
			\[\|\vec{\bold{A}}_{j}\bold{f}\|_{L^{4\over3}(\R^n,\ell_2)}\lesssim{2^{-j{{n-1}\over4}}}\|\bold{f}\|_{L^{4\over3}(\R^n,\ell_2)}.\]
			Again using the $\ell_{2}\hookrightarrow\ell_{\infty}$ embedding and above estimate, we have
			\begin{align*}
				\|M_{j}^0f\|_{L^{4\over3}(\R^n)}&\leq \left\|\left(\sum_{k}|A_{j}^{k}f|^2\right)^{1\over2}\right\|_{L^{4\over3}(\R^n)}\\
				&\lesssim {2^{-j{{n-1}\over4}}}\left\|\left(\sum_{k}|f*\psi_{k-j}|^2\right)^{1\over2}\right\|_{L^{4\over3}(\R^n)}\\
				&\lesssim {2^{-j{{n-1}\over4}}}\|f\|_{L^{4\over3}(\R^n)},
			\end{align*}
			where we used Littlewood--Paley inequality in the last inequality. Using the similar argument inductively, we get
			\begin{align*}
				\|M_{j}^0f\|_{L^p(\R^n)}\lesssim {2^{-j\frac{n-1}{p'}}}\|f\|_{L^p(\R^n)},
			\end{align*}
			for all $1<p\leq2$.
		\end{proof}
		
		\begin{lemma}\label{dual:lac}
			Let $j\geq 1$ and $0\leq\gamma\leq n$, then we have
			\[\|M_j^{\gamma}f\|_{L^{\frac{2n}{n-\gamma}}(\R^n)}\lesssim2^{-j\big(\frac{(n-1)(n-\gamma)}{2n}-\frac{\gamma}{n}\big)}\|f\|_{L^{\frac{2n}{n+\gamma}}(\R^n)}.\]
		\end{lemma}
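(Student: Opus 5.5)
Lemma \ref{dual:lac} asks for an $L^{p}\to L^{p'}$ bound with $p=\frac{2n}{n+\gamma}$, i.e.\ the pair $(1/p,1/q)=(\frac{n+\gamma}{2n},\frac{n-\gamma}{2n})$ on the anti-diagonal $1/p+1/q=1$. I will obtain it by complex interpolation (Lemma \ref{complexinter}) between an $L^2\to L^2$ bound at fractional order $\alpha=0$ and an $L^1\to L^\infty$ bound at fractional order $\alpha=n$. The interpolation parameter is $\theta=1-\gamma/n$ on the $L^2$ side, which indeed gives $1/p=\frac{\theta}{2}+(1-\theta)=\frac{n+\gamma}{2n}$, $1/q=\theta/2=\frac{n-\gamma}{2n}$, and $\alpha=(1-\theta)n=\gamma$.

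First I linearize $M_j^\gamma$ as in Lemma \ref{pointwise_approximation}. I choose a measurable $k(x)$ (truncated to $|k|\le\ell$) and a unimodular $\eta(x)$, and embed the result in the analytic family
\[
T_z h(x)=\eta(x)\,2^{k(x)nz}\,h\ast\psi_{2^{j-k(x)}}\ast\sigma_{2^{k(x)}}(x),\qquad 0\le \mathrm{Re}\,z\le1 .
\]
At $z=\gamma/n$ this is the linearized $M_j^\gamma$. The point is that $|2^{k n(it)}|=1$, so the imaginary parts cost nothing. By monotone convergence it then suffices to bound $T_{\gamma/n}$ uniformly in $\ell$.

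The two endpoints are as follows.
\begin{itemize}
\item On $\mathrm{Re}\,z=0$ we have $|T_{it}h|\le M_j^0h$. Lemma \ref{decay:lac} with $p=2$ gives $\|T_{it}h\|_{L^2}\lesssim 2^{-j\frac{n-1}{2}}\|h\|_{L^2}$.
\item On $\mathrm{Re}\,z=1$ I use the kernel estimate from the proof of Lemma \ref{Pointwise:lac}, namely $|\psi_{2^{j-k}}\ast\sigma_{2^k}(y)|\lesssim 2^j2^{-kn}$. This gives $2^{kn}|h\ast\psi_{2^{j-k}}\ast\sigma_{2^k}(x)|\lesssim 2^j\|h\|_{L^1}$ uniformly in $k$, so $\|T_{1+it}h\|_{L^\infty}\lesssim 2^j\|h\|_{L^1}$.
\end{itemize}

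Interpolating, the constant is
\[
\bigl(2^{-j\frac{n-1}{2}}\bigr)^{\theta}\bigl(2^{j}\bigr)^{1-\theta}=2^{-j\left(\frac{(n-1)(n-\gamma)}{2n}-\frac{\gamma}{n}\right)},
\]
exactly as claimed. The edge cases $\gamma=0$ and $\gamma=n$ are the endpoint estimates themselves.

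The main obstacle is justifying the kernel bound $|\psi_{2^{j-k}}\ast\sigma_{2^k}|\lesssim 2^{j}2^{-kn}$. By scaling it reduces to $\|\psi_{2^j}\ast\sigma\|_\infty\lesssim 2^j$. This holds because $\psi_{2^j}$ is an $L^1$-normalized bump at scale $2^{-j}$, and $\sigma$ gives mass $\lesssim 2^{-j(n-1)}$ to any $2^{-j}$-ball; the Schwartz tails are summed as in Lemma \ref{Pointwise:lac}. The remaining step is the routine check that the family $T_z$ is admissible in Lemma \ref{complexinter}: it is analytic, and the endpoint constants are independent of $t$.
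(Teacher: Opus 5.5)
Your proposal is correct and follows the paper's proof. The paper also uses complex interpolation in the fractional order between the $L^2$ decay bound of Lemma \ref{decay:lac} (at $\gamma=0$) and the bound $\|M_j^n f\|_{L^\infty}\lesssim 2^j\|f\|_{L^1}$ (at $\gamma=n$); it gets the latter from the same kernel estimate, via Lemma \ref{Pointwise:lac}. Your analytic family $2^{k(x)nz}$ with a truncated linearization fills in details the paper leaves implicit.
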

		\begin{proof}
			Note that when $\gamma=n$, from Lemma \ref{Pointwise:lac}, we have
			\begin{equation}\label{alpha=n}
				\|M_j^nf\|_{L^\infty(\R^n)}\lesssim2^j\|f\|_{L^1(\R^n)}.
			\end{equation}
			For $0<\gamma<n$, using a complex interpolation between the estimate \eqref{alpha=n} and the estimate in Lemma \ref{decay:lac} with $p=2$, we obtain that
			\[\|M_j^{\gamma}f\|_{L^{\frac{2n}{n-\gamma}}(\R^n)}\lesssim2^{-j\big(\frac{(n-1)(n-\gamma)}{2n}-\frac{\gamma}{n}\big)}\|f\|_{L^{\frac{2n}{n+\gamma}}(\R^n)}.\]
		\end{proof}

		\begin{proof}[Proof of Theorem \ref{thm:lac}]
			
			Note that when $j=0$, the boundedness region of Theorem \ref{thm:lac} will be contained in the boundedness region of $\mathcal{M}^{\alpha}$. Therefore, it is enough to consider the sum $\sum_{j\geq1}M_j^{\alpha}f$.

			Using complex interpolation between the $L^p$-estimates in Lemma \ref{decay:lac} and \eqref{alpha=n}, we have
			\[\|M_j^{\alpha}f\|_{L^q(\R^n)}\lesssim2^{-j\epsilon}\|f\|_{L^p(\R^n)},\quad\text{ for some }\epsilon=\epsilon(p,q)>0,\]
			whenever $\big(\frac{1}{p},\frac{1}{q}\big)$ lies in the triangle $OAB$. Therefore it remains to prove restricted weak-type estimates on the open line segments $OB$ and $AB$.

			\subsection*{Open line segment $OB$.}
			Fix $0<\alpha<\frac{n(n-1)}{n+1}$.  If $\big(\frac{1}{p},\frac{1}{q}\big)\in OB$, then $p=\frac{n-1}{\alpha}$ and $q=\frac{n(n-1)}{\alpha}$.  We can see that any such point on the open line segment $OB$ lies on a line with unit slope connecting two new line segments introduced in the following figure. 
				\begin{figure}[H]
			\centering
			\begin{tikzpicture}[scale=7]
				\tiny
				\draw[thin][->]  (0,0)node[left]{$O$} --(1.15,0) node[right]{$\frac{1}{p}$};
				\draw[thin][->]  (0,0) --(0,1.2) node[left]{$\frac{1}{q}$};
				\draw[densely dotted] (0,1)node[left]{$(0,1)$}--(1,1)node[right]{$(1,1)$};
				\draw [densely dotted] (1,0)node[below]{$(1,0)$} --(1,1);
				\draw[thin] (0.8,0.2)node[right]{$B$}--(1,1)node[above]{$A$}--(0,0)--cycle;
				\draw[densely dotted] (0.5,0.5)node[right]{$B_0$}--(1,0);
				\draw[densely dotted] (0,0)--(0.9,0.1)node[right]{$B_1$};
				\draw[densely dotted] (1,1)--(0.9,0.1);
				\draw[thin] (3.15/8,0.35/8)--(.675,0.325);
				\node at (1.4/3,0.35/3) {\tiny{$\circ$}};
				\node at (1.1/3,0.54/3) {\tiny{$\left(\frac{\alpha}{n-1},\frac{\alpha}{n(n-1)}\right)$}};

				\node at (1,1) {\tiny{$\circ$}};
				\node at (0.8,0.2) {\tiny{$\circ$}};
				\draw[thin] (0,0) --(1,1);
			\end{tikzpicture}
			\caption{$OB$}
			\label{Fig:lac_OB_B0B}
		\end{figure}
		\noindent
The introduction of the line segments $B_0B$ and $OB_1$ allows one to visualize the implementation of the interpolation Lemma \ref{Bourgain}.  In particular, on the line segment $B_0B$ we have the decay estimates for the operator obtained in Lemma \ref{dual:lac} (with the choice $\gamma=\alpha$):
\begin{equation}\label{decay:OB}
				\|M_j^{\alpha}f\|_{L^{\frac{2n}{n-\alpha}}(\R^n)}\lesssim2^{-j\big(\frac{(n-1)(n-\alpha)}{2n}-\frac{\alpha}{n}\big)}\|f\|_{L^{\frac{2n}{n+\alpha}}(\R^n)}.
			\end{equation}	
Meanwhile, we claim that on $OB_1$ one has the growth estimate
			\begin{equation}\label{growth:OB}
				\|M_j^{\alpha}f\|_{L^{r}(\R^n)}\lesssim2^{j\frac{\alpha}{2n}}\|f\|_{L^{\frac{r}{2n-1}}(\R^n)},
			\end{equation}
			for
\begin{align}\label{r_equation}
			r=2q=\frac{2n(n-1)}{\alpha}.
			\end{align}

			If we assume \eqref{growth:OB} for the moment, then the validity of the estimates \eqref{decay:OB} and \eqref{growth:OB} for the operators $T_j=M_j^{\alpha}$ allow one to apply the interpolation Lemma \ref{Bourgain} with $\epsilon_1=\frac{\alpha}{2n}$ and $\epsilon_2=\frac{(n-1)(n-\alpha)}{2n}-\frac{\alpha}{n}$ to obtain a restricted weak-type estimate for the operator $T = \sum_j T_j: L^{p,1}(\mathbb{R}^n) \to L^{q,\infty}(\mathbb{R}^n)$ for
			\begin{align*}
				\frac{1}{p}&=\frac{\theta}{ \frac{r}{2n-1}}+\frac{1-\theta}{\frac{2n}{n+\alpha}}=\frac{\alpha}{n-1},\\
				\frac{1}{q}&=\frac{\theta}{r} +\frac{1-\theta}{\frac{2n}{n-\alpha}}=\frac{\alpha}{n(n-1)},
			\end{align*}
where 
\begin{align}\label{lacunary_theta}
\theta = \frac{(n-1)(n-\alpha)-2\alpha}{(n-1)(n-\alpha)-\alpha}.
\end{align}
			\subsection*{Open line segment $AB$.}We follow a similar argument to that for the line segment $OB$. Fix $0<\alpha<\frac{n(n-1)}{n+1}$. Then $\big(\frac{1}{p},\frac{1}{q}\big)\in AB$ if $p=\frac{n(n-1)}{n(n-1)-\alpha}$ and $q=\frac{n-1}{n-1-\alpha}$. One again has the decay estimate \eqref{decay:OB} on the segment $B_0B$, while we claim that on $AB_1$ one has the growth estimate
			\begin{equation}\label{growth:AB}
				\|M_j^{\alpha}f\|_{L^{\left(\frac{r}{2n-1}\right)'(\R^n)}}\lesssim2^{j\frac{\alpha}{2n}}\|f\|_{L^{r'}(\R^n)}.
			\end{equation}
			Assuming \eqref{growth:AB} holds true, a similar application of the interpolation Lemma \ref{Bourgain}, utilizing the family of estimates \eqref{decay:OB} and \eqref{growth:AB}, with $\epsilon_1=\frac{\alpha}{2n}$ and $\epsilon_2=\frac{(n-1)(n-\alpha)}{2n}-\frac{\alpha}{n}$ gives a restricted weak-type estimate for 
			\begin{align*}
				\frac{1}{p}&= \frac{\theta}{r'}+\frac{1-\theta}{\frac{2n}{n+\alpha}}=\frac{n(n-1)-\alpha}{n(n-1)},\\
				\frac{1}{q}&=\frac{\theta}{(\frac{r}{2n-1})^\prime}+\frac{1-\theta}{\frac{2n}{n-\alpha}}=\frac{n-1-\alpha}{n-1},
			\end{align*}
where again $r$ is as defined in \eqref{r_equation} and $\theta$ is as defined in \eqref{lacunary_theta}.
This completes the proof of Theorem \ref{thm:lac} except for the claims \eqref{growth:OB} and \eqref{growth:AB}.

			\subsubsection*{Proof of Claims \eqref{growth:OB} and \eqref{growth:AB}.}Let $r=\frac{2n(n-1)}{\alpha}$, as defined in \eqref{r_equation}. We recall from Lemma \ref{decay:lac}, we have the estimates
			\begin{align}
				\|M_j^0f\|_{L^r(\R^n)}&\lesssim2^{-j\frac{\alpha}{2n}}\|f\|_{L^r(\R^n)},\label{claim1}\\
				\|M_j^0f\|_{L^{r'}(\R^n)}&\lesssim2^{-j\frac{\alpha}{2n}}\|f\|_{L^{r'}(\R^n)},\label{claim2}
			\end{align}
while the choice $\gamma=\frac{n(n-1)-\alpha}{n-1}$ in Lemma \ref{dual:lac} gives the estimate
			\begin{equation}\label{conjugatediagonal}
				\|M_j^{\frac{n(n-1)-\alpha}{n-1}}f\|_{L^r(\R^n)}\lesssim2^{j\big(1-\frac{n+1}{r}\big)}\|f\|_{L^{r'}(\R^n)}.
			\end{equation}
			Complex interpolation of \eqref{claim1} and \eqref{conjugatediagonal}, \eqref{claim2} and \eqref{conjugatediagonal}, yields the desired inequalities, \eqref{growth:OB} and \eqref{growth:AB}, respectively.
		\end{proof}
		
		\section{Proof of Theorem \ref{thm:full}} \label{full}

				We have the following estimates for $M_{*,j}^{\alpha}$ defined in \eqref{LP_maximal_0} and \eqref{LP_maximal}.
		\begin{lemma}\label{Pointwise:full}
			Let $n\geq2,j\geq0$ and $0\leq\alpha\leq n$, then we have
			\[M_{*,j}^{\alpha}f(x)\lesssim 2^j\mathcal{M}^{\alpha}f(x).\]
		\end{lemma}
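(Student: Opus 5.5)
The plan is to follow the proof of Lemma \ref{Pointwise:lac}, with the extra supremum over $t\in[2^k,2^{k+1})$ absorbed into the constants. Fix $k\in\mathbb{Z}$ and $t\in[2^k,2^{k+1})$. The key input is a kernel bound for the smoothed spherical measure that is uniform in $t$ on this dyadic interval:
\[
|\psi_{2^{j-k}}\ast\sigma_t(y)|\lesssim 2^{j}\,2^{-kn}\,(1+2^{-k}|y|)^{-N},\qquad N>n,
\]
and, for $j=0$, the analogous bound with $\phi_{2^k}$ in place of $\psi_{2^{j-k}}$ and without the factor $2^j$.

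To prove the kernel bound, I would rescale by $2^k$. Then $\sigma_t$ becomes $\sigma_s$ with $s=t2^{-k}\in[1,2)$, and $\psi_{2^{j-k}}$ becomes an $L^1$-normalized bump at scale $2^{-j}$. The convolution of $\sigma_s$ with such a bump has three properties:
\begin{enumerate}
\item it is bounded by $2^{j}$, since the bump has height $2^{jn}$ and the mass of $\sigma_s$ in a $2^{-j}$-ball is about $2^{-j(n-1)}$;
\item it decays rapidly away from the $2^{-j}$-neighborhood of the sphere of radius $s$;
\item since $s\in[1,2)$, that neighborhood lies inside $B(0,3)$.
\end{enumerate}
Together these give $\lesssim 2^j(1+|y|)^{-N}$ uniformly in $s\in[1,2)$. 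Undoing the scaling produces the factor $2^{-kn}$.

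With the kernel bound in hand:
\begin{enumerate}
\item Use $t^\alpha\le 2^{\alpha}2^{k\alpha}$ to write
\[
t^\alpha|f\ast\psi_{2^{j-k}}\ast\sigma_t(x)|\lesssim 2^{j}\,2^{k(\alpha-n)}\int|f(x-y)|(1+2^{-k}|y|)^{-N}\,dy.
\]
\item Split the integral into the ball $|y|\le 2^k$ and the dyadic annuli $2^{k+l-1}<|y|\le 2^{k+l}$.
\item Bound each piece by $2^{-lN}2^{(k+l)(n-\alpha)}\cdot (2^{k+l})^{\alpha}\fint_{B(x,2^{k+l})}|f|$. This gives
\[
\lesssim 2^j\mathcal{M}^\alpha f(x)\sum_{l\ge0}2^{-l(N-n+\alpha)},
\]
and the series converges because $N>n$.
\end{enumerate}
The estimate is uniform in $k$ and in $t\in[2^k,2^{k+1})$, so taking the supremum over both proves the lemma.

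The only real point to check is that the kernel bound holds uniformly for $t$ in the dyadic interval and not just at $t=2^k$. This is handled by the rescaling argument above, because $\sigma_s$ for $s\in[1,2]$ is a compact family of measures with uniformly controlled mass in small balls. The case $j=0$ is easier: $\phi_{2^k}\ast\sigma_t$ is a Schwartz-type bump of scale $2^k$, since $t\sim 2^k$.
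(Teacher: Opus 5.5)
Your proposal is correct and is essentially the paper's argument. The paper omits the proof as identical to that of Lemma \ref{Pointwise:lac}: the kernel bound $2^j2^{-kn}(1+2^{-k}|y|)^{-N}$ followed by the ball-plus-dyadic-annuli comparison with $\mathcal{M}^\alpha$, and your rescaling to $s=t2^{-k}\in[1,2)$ makes explicit the uniformity in $t\in[2^k,2^{k+1})$ that the paper leaves implicit. For $j=0$, your reading of the low-frequency kernel as a bump of spatial scale $2^k$ is the intended one, namely $\phi_{2^{-k}}$ under the convention $\phi_t(x)=t^n\phi(tx)$ as in \eqref{lacunary_LP_maximal_0}; the $\phi_{2^k}$ in \eqref{LP_maximal_0} is a typo, since taken literally $\phi_{2^k}\ast\sigma_t$ is a thin shell and the $j=0$ bound would fail.
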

		The proof of Lemma \ref{Pointwise:full} is similar to that of Lemma \ref{Pointwise:lac}, so we omit the details for brevity.
		\begin{lemma}\label{full:0}
			Let $n\geq2$ and $j\geq 1$, then we have
			\begin{enumerate}
				\item When $1<p\leq2$, $\|M_{*,j}^0f\|_{L^p(\R^n)}\lesssim2^{-j\big(n-1-\frac{n}{p}\big)}\|f\|_{L^p(\R^n)}$.
				\item When $p\geq2$, $\|M_{*,j}^0f\|_{L^p(\R^n)}\lesssim2^{-j\frac{n-2}{p}}\|f\|_{L^p(\R^n)}$.
			\end{enumerate}
		\end{lemma}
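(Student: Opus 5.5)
We prove the two estimates at three anchor exponents, $p=1$, $p=2$ and $p=\infty$, and obtain the rest by interpolation. The numerology supports this. At $p=2$ both formulas give $2^{-j\frac{n-2}{2}}$. As $p\to1$, the exponent $n-1-\frac{n}{p}$ tends to $-1$, which is the growth $2^{j}$. As $p\to\infty$, the exponent $\frac{n-2}{p}$ tends to $0$. Both exponents are affine in $1/p$, so they are exactly what interpolation between these anchors produces. Since $M_{*,j}^0$ is sublinear, we interpolate either via the linearization of Lemma \ref{pointwise_approximation} or directly by Marcinkiewicz. In the second case the constants are interpolated in the usual multiplicative way, up to harmless factors.

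\textbf{Endpoints $p=\infty$ and $p=1$.} For $p=\infty$, the kernel $\psi_{2^{j-k}}\ast\sigma_t$ has $L^1$ norm $\lesssim1$ uniformly in $j,k$ and $t\in[2^k,2^{k+1})$. Hence $\|M_{*,j}^0f\|_{L^\infty}\lesssim\|f\|_{L^\infty}$. For $p=1$, Lemma \ref{Pointwise:full} with $\alpha=0$ gives $M_{*,j}^0f\lesssim2^j\mathcal{M}f$, so $M_{*,j}^0$ is of weak type $(1,1)$ with constant $2^j$.

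\textbf{The $L^2$ estimate.} This is the main step: we need $\|M_{*,j}^0f\|_{L^2}\lesssim2^{-j\frac{n-2}{2}}\|f\|_{L^2}$. We first use the square function in $k$:
\[
M_{*,j}^0f\le\Big(\sum_k \sup_{t\in[2^k,2^{k+1})}|f\ast\psi_{2^{j-k}}\ast\sigma_t|^2\Big)^{1/2}.
\]
By Littlewood--Paley orthogonality it then suffices to treat a single $k$, and after scaling we may take $k=0$. For the resulting local supremum over $t\in[1,2]$ we apply the Sobolev embedding in $t$:
\[
\sup_{t\in[1,2]}|F(t)|^2\le|F(1)|^2+2\Big(\int_1^2|F|^2\Big)^{1/2}\Big(\int_1^2|\partial_tF|^2\Big)^{1/2},
\]
where $F(t)=f\ast\psi_{2^j}\ast\sigma_t(x)$. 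By Plancherel,
\[
\|F\|_{L^2_{x,t}}\lesssim2^{-j\frac{n-1}{2}}\|f\|_2,
\]
using $|\widehat\sigma(\xi)|\lesssim|\xi|^{-\frac{n-1}{2}}$ on $|\xi|\sim2^j$. Differentiating in $t$ costs a factor $2^j$, since $\partial_t\widehat{\sigma}(t\xi)$ has the same decay times $|\xi|$. This gives
\[
\|\partial_tF\|_{L^2_{x,t}}\lesssim2^{j}\,2^{-j\frac{n-1}{2}}\|f\|_2.
\]
Combining, $\|\sup_t|F|\|_{L^2}\lesssim2^{-j\frac{n-1}{2}}2^{j/2}\|f\|_2=2^{-j\frac{n-2}{2}}\|f\|_2$. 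Rescaling back to general $k$ and summing the square function over $k$ preserves this bound, because the pieces $\widehat f\,\widehat\psi_{2^{j-k}}$ are almost orthogonal in $k$.

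\textbf{Interpolation.} For $p\ge2$, interpolating the $L^2$ bound with the $L^\infty$ bound gives the constant
\[
2^{-j\frac{n-2}{2}\cdot\frac{2}{p}}=2^{-j\frac{n-2}{p}},
\]
which is statement (2). For $1<p\le2$, we interpolate weak $(1,1)$ with constant $2^j$ against $L^2$ with constant $2^{-j\frac{n-2}{2}}$. Writing $\frac1p=\frac{1-\theta}{1}+\frac{\theta}{2}$, we have $\theta=2-\frac{2}{p}$, and the constant is
\[
2^{j(1-\theta)}\,2^{-j\frac{(n-2)\theta}{2}}=2^{j(\frac2p-1)-j(n-2)(1-\frac1p)}=2^{-j(n-1-\frac np)},
\]
which is statement (1). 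The main obstacle is the $L^2$ step, specifically checking that the Sobolev-in-$t$ argument together with the Littlewood--Paley square function gives exactly the loss $2^{j/2}$ uniformly in $k$. The endpoint estimates and the interpolation are routine.
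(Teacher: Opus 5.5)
Your proposal is correct and is essentially the paper's argument. The paper cites Grafakos for the range $1<p\le 2$, and Grafakos's proof is exactly your combination of an $L^2$ bound (Sobolev inequality in $t$ plus Plancherel), the weak-$(1,1)$ bound with constant $2^j$, and Marcinkiewicz interpolation; the paper then handles $p\ge 2$ by interpolating the $L^2$ bound with the trivial $L^\infty$ bound, as you do. Your dyadic square-function reduction in $k$ is a correct adaptation of that argument to the paper's operator, whose cutoff $\psi_{2^{j-k}}$ is fixed on each block $t\in[2^k,2^{k+1})$ rather than dilating with $t$.
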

		\begin{proof}
			When $1<p\leq2$, $L^p-$estimates are given in \cite[page 481]{Grafakos}. For $p>2$, we interpolate $L^2-$estimate from $(1)$ with trivial $L^\infty-$bound of $M_{*,j}^0$ to obtain the desired claim.
		\end{proof}
		
		\begin{lemma}\label{full:alpha}
			Let $n\geq3,j\geq1$ and $0\leq\alpha\leq n$. Then, the following holds.
			\begin{enumerate}
				\item $\|M_{*,j}^\alpha f\|_{L^{\frac{2n}{n-\alpha}}(\R^n)}\lesssim2^{-j\big(n-1-\frac{n+\alpha}{2}\big)}\|f\|_{L^{\frac{2n}{n+\alpha}}(\R^n)}$.
				\item  When $\frac{n}{n+1}\leq\alpha\leq n$,
				$$\|M_{*,j}^{\alpha}f\|_{L^{\frac{2n^2}{(n-\alpha)(n-1)}}(\R^n)}\lesssim2^{-j\frac{n(n-1)^2-(n^2+1)\alpha}{2n^2}}\|f\|_{L^{\frac{2n^2}{n^2-n+(n+1)\alpha}}(\R^n)}.$$
			\end{enumerate}
		\end{lemma}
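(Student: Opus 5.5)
For part (1), I would use complex interpolation in the fractional parameter. One endpoint is the $L^2$ estimate of Lemma \ref{full:0}:
\[
\|M_{*,j}^0f\|_{L^2(\R^n)}\lesssim 2^{-j\frac{n-2}{2}}\|f\|_{L^2(\R^n)}.
\]
The other is the case $\alpha=n$ of Lemma \ref{Pointwise:full}. Since $\mathcal{M}^nf\le \|f\|_{L^1(\R^n)}$, this gives
\[
\|M_{*,j}^nf\|_{L^\infty(\R^n)}\lesssim 2^j\|f\|_{L^1(\R^n)}.
\]
To make the interpolation rigorous, I first linearize $M_{*,j}^\alpha$ using Lemma \ref{pointwise_approximation}, with measurable $t_\ell(x)$ and $\eta_\ell(x)$. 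I then consider the analytic family
\[
T_z h(x)=\eta_\ell(x)\,t_\ell(x)^{nz}\,h\ast\psi_{2^ja(t_\ell)}\ast\sigma_{t_\ell}(x).
\]
On $\mathrm{Re}\,z=0$ it is dominated by $M^0_{*,j}$, and on $\mathrm{Re}\,z=1$ by $M^n_{*,j}$, since $|t^{iy}|=1$. The constants are therefore independent of $\mathrm{Im}\,z$, and Lemma \ref{complexinter} applies. With $\theta=\alpha/n$ we get $L^{2n/(n+\alpha)}\to L^{2n/(n-\alpha)}$ bounds with exponent
\[
-(1-\tfrac{\alpha}{n})\tfrac{n-2}{2}+\tfrac{\alpha}{n}=-\tfrac{n-2-\alpha}{2}=-\big(n-1-\tfrac{n+\alpha}{2}\big).
\]
Letting $\ell\to\infty$ by monotone convergence gives (1).

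For part (2), the same complex interpolation in $\alpha$ reduces matters to two endpoints, because the exponents and the decay rate in (2) depend affinely on $\alpha$. At $\alpha=n$ the claimed bound is $L^1\to L^\infty$ with constant $2^{j}$: indeed $\frac{n(n-1)^2-(n^2+1)n}{2n^2}=-1$, so this endpoint is again Lemma \ref{Pointwise:full}. At $\alpha=\frac{n}{n+1}$ the exponents become $p=2$ and $q=\frac{2(n+1)}{n-1}$, and the claimed decay is $2^{-j\frac{n^2-2n-1}{2(n+1)}}$. Thus the whole of (2) reduces to the single estimate
\[
\big\|M_{*,j}^{n/(n+1)}f\big\|_{L^{\frac{2(n+1)}{n-1}}(\R^n)}\lesssim 2^{-j\frac{n^2-2n-1}{2(n+1)}}\|f\|_{L^2(\R^n)}.
\]

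This Stein--Tomas/Strichartz type endpoint is the main obstacle. I would prove it in three steps.

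\emph{Single scale.} By the scaling built into $t^\alpha$ and $q=\frac{np}{n-\alpha p}$, it suffices to treat one scale $k=0$, i.e.\ $\sup_{t\in[1,2]}|f\ast\psi_{2^j}\ast\sigma_t|$, with constants uniform in $k$.

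\emph{Time Sobolev embedding.} For fixed $x$, I would bound the supremum over $t\in[1,2]$ by
\[
\|F\|_{L^q_t}^{1-1/q}\,\|\partial_tF\|_{L^q_t}^{1/q},\qquad F(t)=f\ast\psi_{2^j}\ast\sigma_t(x).
\]
Since $\partial_t$ costs a factor $2^j$ at frequency $2^j$, this loses $2^{j/q}$.

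\emph{Space-time bound.} Using the asymptotics $\widehat{\sigma}(\xi)=\sum_\pm e^{\pm i|\xi|}a_\pm(\xi)$ with $|a_\pm|\lesssim|\xi|^{-(n-1)/2}$, the operator becomes a wave propagator $e^{\pm it\sqrt{-\Delta}}$ applied to a function of frequency $2^j$, with a gain of $2^{-j(n-1)/2}$. The Strichartz estimate at $q=\frac{2(n+1)}{n-1}$ holds at regularity $\dot H^{1/2}$, costing $2^{j/2}$. Collecting the factors gives the exponent
\[
-\tfrac{n-1}{2}+\tfrac12+\tfrac{n-1}{2(n+1)}=-\tfrac{n^2-2n-1}{2(n+1)},
\]
which is exactly the required decay.

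\emph{Summing scales.} Finally I would pass from one scale to $\sup_k$ by the bound $\sup_k\le(\sum_k|\cdot|^q)^{1/q}$. Applying the single-scale estimate to $f\ast\tilde\psi_{2^{j-k}}$, with $\tilde\psi$ a fattened Littlewood--Paley piece, and using $q\ge2$ together with almost orthogonality in $L^2$, this gives $(\sum_k\|f\ast\tilde\psi_{2^{j-k}}\|_2^q)^{1/q}\le(\sum_k\|f\ast\tilde\psi_{2^{j-k}}\|_2^2)^{1/2}\lesssim\|f\|_2$. Interpolating this endpoint with the $\alpha=n$ endpoint through the analytic family above yields (2) for all $\frac{n}{n+1}\le\alpha\le n$.
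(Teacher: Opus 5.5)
Your proposal is correct and follows the paper's proof. Part (1) is complex interpolation between the $\alpha=n$ bound $L^1\to L^\infty$ with constant $2^j$ (from Lemma \ref{Pointwise:full}) and the $L^2$ bound of Lemma \ref{full:0}. Part (2) interpolates the same $\alpha=n$ endpoint with the $L^2\to L^{\frac{2(n+1)}{n-1}}$ bound for $M_{*,j}^{n/(n+1)}$, which is obtained by rescaling a single-scale estimate and summing over $k$ via $\ell^2\hookrightarrow\ell^{\frac{2(n+1)}{n-1}}$ and $L^2$ orthogonality.

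The only difference is that the paper cites the single-scale estimate as (1.10) of Lee. You instead rederive it from Sobolev embedding in $t$ plus the Strichartz estimate, which is the standard proof of that bound.

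One small fix: when letting $\ell\to\infty$, use Fatou's lemma, or monotone convergence applied to the truncated maximal functions. Monotone convergence does not apply to $U_{j,\ell}$ directly, since it need not be monotone in $\ell$.
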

		\begin{proof}
			The endpoint estimate for fractional maximal function $\mathcal{M}^\alpha: L^1(\mathbb{R}^n) \to L^{\frac{n}{n-\alpha},\infty}(\R^n)$ and  Lemma \ref{Pointwise:full} imply
			\begin{equation}\label{full:endpoint}
				\|M_{*,j}^\alpha f\|_{L^{\frac{n}{n-\alpha},\infty}(\R^n)}\lesssim2^{j}\|f\|_{L^{1}(\R^n)}.
			\end{equation}
Complex interpolation of the estimate \eqref{full:endpoint} for $\alpha=n$ with $L^2-$estimate from Lemma \ref{full:0}(1) proves $(1)$.
			
			To prove $(2)$, we recall estimate (1.10) from \cite{Lee1}.
			\[\Big\|M_{0,j}^{0}f\Big\|_{L^{\frac{2(n+1)}{n-1}}(\R^n)}\lesssim2^{-j\frac{n^2-2n-1}{2(n+1)}}\|f\ast\psi_{2^{j}}\|_{L^2(\R^n)}.\]
			For general $k$, by rescaling the above estimate, we have
			\[\Big\|M_{k,j}^{0}f\Big\|_{L^{\frac{2(n+1)}{n-1}}(\R^n)}\lesssim2^{-k\frac{n}{n+1}}2^{-j\frac{n^2-2n-1}{2(n+1)}}\|f\ast\psi_{2^{j-k}}\|_{L^2(\R^n)}.\]
Therefore
			\begin{align}\label{novern+1}
				\Big\|M_{k,j}^{\frac{n}{n+1}}f\Big\|_{L^{\frac{2(n+1)}{n-1}}(\R^n)}&\lesssim2^{k\frac{n}{n+1}}\Big\|M_{k,j}^{0}f\Big\|_{L^{\frac{2(n+1)}{n-1}}(\R^n)}\nonumber\\
				&\lesssim2^{-j\frac{n^2-2n-1}{2(n+1)}}\|f\ast\psi_{2^{j-k}}\|_{L^2(\R^n)}.
			\end{align}
			The inequality \eqref{novern+1} in combination with the embedding $\ell_2\hookrightarrow\ell_{\frac{2(n+1)}{n-1}}\hookrightarrow\ell_\infty$ yields			\begin{align*}
				\|M_{*,j}^{\frac{n}{n+1}}f\|_{L^{\frac{2(n+1)}{n-1}}(\R^n)}&\leq \bigg(\sum_{k\in\Z}\Big\|M_{k,j}^{\alpha}f\Big\|_{L^{\frac{2(n+1)}{n-1}}(\R^n)}^{\frac{2(n+1)}{n-1}}\bigg)^{\frac{n-1}{2(n+1)}}\\
				&\lesssim \bigg(\sum_{k\in\Z}2^{-j\frac{n^2-2n-1}{n-1}}\|f\ast\psi_{2^{j-k}}\|_{L^2(\R^n)}^{\frac{2(n+1)}{n-1}}\bigg)^{\frac{n-1}{2(n+1)}}\\
				&\lesssim 2^{-j\frac{n^2-2n-1}{2(n+1)}}\bigg(\sum_{k\in\Z}\|f\ast\psi_{2^{j-k}}\|_{L^2(\R^n)}^{2}\bigg)^{\frac{1}{2}}\\
				&\lesssim 2^{-j\frac{n^2-2n-1}{2(n+1)}}\|f\|_{L^2(\R^n)}.
			\end{align*}
			Complex interpolation of the above $L^2\to L^{\frac{2(n+1)}{n-1}}-$estimate of $M_{*,j}^{\frac{n}{n+1}}$ with \eqref{full:endpoint} for $\alpha=n$ yields $(2)$.
		\end{proof} 
		We now complete the proof of Theorem \ref{thm:full}.
		\begin{proof}[Proof of Theorem \ref{thm:full}]
			We will prove restricted weak-type estimates on each line segment separately.
			\subsection*{Open line segment $PQ$.}Fix $0<\alpha<n-2$ and note that if $(\frac{1}{p},\frac{1}{q})$ lie on $PQ$, then $p=\frac{n}{n-1}$ and $q=\frac{n}{n-1-\alpha}$.  As in the lacunary case, any such point lies on a line with unit slope through two segments we introduce in the following figure.
			
\begin{figure}[H]
			\centering
			\begin{tikzpicture}[scale=8]
				\tiny
				(0,0)--(0.8,0.8)--(0.8,0.2)--(20/26,4/26)--cycle;
				\draw[thin][->]  (0,0)node[left]{$O$} --(1.15,0) node[right]{$\frac{1}{p}$};
				\draw[thin][->]  (0,0) --(0,1.2) node[left]{$\frac{1}{q}$};
				\draw[thin] (0,1)node[left]{$(0,1)$}--(1,1)node[right]{$(1,1)$};
				\draw[densely dotted] (0.5,0.5)node[left]{$Q_0$}--(1,0);
				\draw [densely dotted] (.9,.9)node[above]{$P_1$} --(.9,0.1)node[below]{$Q_1$};
				
				\draw[thin] (0.675,0.325) --(.9,.55);
				\draw [densely dotted] (1,0)node[below]{$(1,0)$} --(1,1);
				\draw[thin] (20/26,4/26)node[below]{$R$}--(0.8,0.2)node[right]{$Q$}--(0.8,0.8)node[above]{$P$}--(0,0)--cycle;
				\node at (0.8,0.8) {\tiny{$\circ$}};
				\node at (0.8,0.45) {\tiny{$\circ$}};
				\node at (0.68,0.47) {\tiny{$\left(\frac{n-1}{n},\frac{n-1-\alpha}{n}\right)$}};
				\node at (4/5,1/5) {\tiny{$\circ$}};
				\node at (20/26,4/26) {\tiny{$\circ$}};
				\draw[thin] (0,0) --(0.8,0.8);
			\end{tikzpicture}
			\caption{$PQ$.}\label{Fig:PQ}
		\end{figure}	

			Complex interpolation of the estimates \eqref{full:endpoint} and the estimates in Lemma \ref{full:alpha}(1) yield growth estimates on the line segment $P_1Q_1$:
			\[\|M_{*,j}^\alpha f\|_{L^{\frac{2n}{2n-2\alpha-1}}(\R^n)}\lesssim2^{\frac{j}{2}}\|f\|_{L^{\frac{2n}{2n-1}}(\R^n)}.\]
 Lemma \ref{full:alpha}(1) provides the decay estimates on the line segment $Q_0Q$.  These bounds allow one to apply the interpolation Lemma \ref{Bourgain} with $\epsilon_1=\frac{1}{2}$ and $\epsilon_2=n-1-\frac{n+\alpha}{2}$ to obtain restricted weak-type bounds for 
			\begin{align*}
				\frac{1}{p}=& \frac{\theta}{\frac{2n}{2n-1}}+\frac{1-\theta}{\frac{2n}{n+\alpha}}=\frac{n-1}{n},\\
				\frac{1}{q}=& \frac{\theta}{\frac{2n}{2n-1-2\alpha}}+\frac{1-\theta}{\frac{2n}{n-\alpha}}=\frac{n-1-\alpha}{n},
			\end{align*}
where
\begin{align}\label{full_theta}
\theta = \frac{n-2-\alpha}{n-1-\alpha}.
\end{align}
			
			\subsection*{Open line segment $OR$}Observe that if $(\frac{1}{p},\frac{1}{q})$ lie on $OR$, then $p=\frac{n-1}{\alpha}$ and $q=\frac{n(n-1)}{\alpha}$ for $0<\alpha<\frac{n(n-1)^2}{n^2+1}$.   This point again lies on a line with unit slope through two new line segments we introduce in the following figure.
						
			\begin{figure}[H]
			\centering
			\begin{tikzpicture}[scale=8]
				\tiny
				(0,0)--(0.8,0.8)--(0.8,0.2)--(20/26,4/26)--cycle;
				\draw[thin][->]  (0,0)node[left]{$O$} --(1.15,0) node[right]{$\frac{1}{p}$};
				\draw[thin][->]  (0,0) --(0,1.2) node[left]{$\frac{1}{q}$};
				\draw[thin] (0,1)node[left]{$(0,1)$}--(1,1)node[right]{$(1,1)$};
				\draw[densely dotted] (0.5,1/3)node[left]{$R_0$}--(1,0);
				\draw [densely dotted] (0,0) --(44/50,4/50)node[below]{$R_1$};
				
				\draw[thin] (0.385,0.035) --(.61,.26);
				\draw [densely dotted] (1,0)node[below]{$(1,0)$} --(1,1);
				\draw[thin] (20/26,4/26)node[below]{$R$}--(0.8,0.2)node[right]{$Q$}--(0.8,0.8)node[above]{$P$}--(0,0)--cycle;
				\node at (0.8,0.8) {\tiny{$\circ$}};
				\node at (1.75/4,1.75/20) {\tiny{$\circ$}};
				\node at (0.33,0.12) {\tiny{$\left(\frac{\alpha}{n-1},\frac{\alpha}{n(n-1)}\right)$}};
				\node at (4/5,1/5) {\tiny{$\circ$}};
				\node at (20/26,4/26) {\tiny{$\circ$}};
				\draw[thin] (0,0) --(0.8,0.8);
			\end{tikzpicture}
			\caption{$OR$.}
			\label{Fig:OR}
		\end{figure}

			We recall the estimate from Lemma \ref{full:alpha}(2) for $\alpha=n-1$,
			\begin{equation}\label{growth:n-1}
				\|M_{*,j}^{n-1}f\|_{L^{\frac{2n^2}{n-1}}(\R^n)}\lesssim2^{j\frac{n^2-1}{2n^2}}\|f\|_{L^{\frac{2n^2}{2n^2-n-1}}(\R^n)}.
			\end{equation}
			Using complex interpolation between \eqref{growth:n-1} and trivial estimate $\|M_{*,j}^0f\|_{L^{\infty}}\lesssim\|f\|_{L^{\infty}}$, we obtain
			\begin{equation}\label{growth:OR}
				\|M_{*,j}^{\alpha}f\|_{L^{\frac{2n^2}{\alpha}}(\R^n)}\lesssim2^{j\frac{(n+1)\alpha}{2n^2}}\|f\|_{L^{\frac{2n^2}{(2n+1)\alpha}}(\R^n)}.
			\end{equation}
An application of the interpolation Lemma \ref{Bourgain} between \eqref{growth:OR} and Lemma \ref{full:alpha}(2) with $\epsilon_1=\frac{(n+1)\alpha}{2n^2}$ and $\epsilon_2=\frac{n(n-1)^2-(n^2+1)\alpha}{2n^2}$ proves restricted weak-type bounds for
			\begin{align*}
				\frac{1}{p}=& \frac{\theta}{\frac{2n^2}{(2n+1)\alpha}}+\frac{1-\theta}{\frac{2n^2}{n^2-n+(n+1)\alpha}}=\frac{\alpha}{n-1},\\
				\frac{1}{q}=&\frac{\theta}{\frac{2n^2}{\alpha}}+\frac{1-\theta}{\frac{2n^2}{(n-1)(n-\alpha)}}=\frac{\alpha}{n(n-1)},
			\end{align*}
where
\begin{align}
\theta = \frac{n(n-1)^2-(n^2+1)\alpha}{n(n-1)(n-1-\alpha)}
\end{align}
			when $\frac{n}{n+1}\leq\alpha<\frac{n(n-1)^2}{n^2+1}$. 
			
			For $0<\alpha<\frac{n}{n+1}$, we use complex interpolation between the trivial $L^\infty(\R^n)\to L^\infty(\R^n)$-estimate of $A^{0}_{*}$ and restricted weak-type bound of $A^{\frac{n}{n+1}}_{*}$ for $p=\frac{n^2-1}{n}, q=n^2-1$, which is proved in the above paragraph.
			
			\subsection*{Open line segment $QR$} Note that if if $(\frac{1}{p},\frac{1}{q})\in QR$, then $n-2<\alpha<\frac{n(n-1)^2}{n^2+1},p=\frac{2n}{(n-1)(n-\alpha)}$ and $q=\frac{2n}{n(n-1)-(n+1)\alpha}$.  This point again lies on a line with unit slope through two new line segments we introduce in the following figure, which is an enlargement of Figure \ref{Fig:OR} around the segment $QR$.

		\begin{figure}[H]
			\centering
			\begin{tikzpicture}[scale=8]
				\tiny

				\draw[thin] (0,7/50)--(0.3,0.2)node[below]{$R$}--(0.7,4/5)node[right]{$Q$}--(0.7,.9);
				\draw[densely dotted] (0,2/5)node[left]{$R_0$}--(0.3,0.2);
				\draw[densely dotted] (0.7,4/5)--(1,0.5)node[left]{$Q_1$};
				\draw[thin] (1/4,7/30)--(91/120,89/120);
				\node at (28/60,27/60) {\tiny{$\circ$}};
				\node at (28/60-.2,27/60+.1) {$\left(\frac{(n-1)(n-\alpha)}{2n},\frac{n(n-1)-(n+1)\alpha}{2n}\right)$};

			\end{tikzpicture}
			\caption{$QR$.}
			\label{Fig:QR}
		\end{figure}
On the line segment $R_0R$, Lemma \ref{full:alpha}(2) gives decay estimates, while on the line segment 	$QQ_1$ Lemma \ref{full:alpha}(1) gives growth estimates.  Thus we may again apply Lemma \ref{Bourgain} with $\epsilon_1=\frac{\alpha-n+2}{2}$ and $\epsilon_2=\frac{n(n-1)^2-(n^2+1)\alpha}{2n^2}$ to obtain the restricted weak-type estimate for
			\begin{align*}
				\frac{1}{p}=&\frac{\theta}{\frac{2n}{n+\alpha}}+\frac{1-\theta}{\frac{2n^2}{n^2-n+(n+1)\alpha}}=\frac{(n-1)(n-\alpha)}{2n},\\
				\frac{1}{q}=&\frac{\theta}{\frac{2n}{n-\alpha}}+\frac{1-\theta}{\frac{2n^2}{(n-1)(n-\alpha)}}=\frac{n(n-1)-(n+1)\alpha}{2n},
			\end{align*}
where
\begin{align}\label{last_theta}
\theta = \frac{n(n-1)^2-(n^2+1)\alpha}{n-\alpha}.
\end{align}
	
A summary of the applications of the interpolation Lemma \ref{Bourgain} is depicted in the following table.
			\begin{table}[ht]
				\centering
				\begin{tabular}{| M{.6cm}| M{2.3cm}| M{1cm}| M{1.2cm}| M{1.9cm}| M{1.6cm}| M{2.3cm}| }
					\hline
					Line & Range of $\alpha$  & $p_1$ & $q_1$ & $p_2$ & $q_2$ & $\theta$ \\ 
					\hline
				 &&&&& &	\\
					
					$PQ$ & $(0,n-2)$ & $\frac{2n}{2n-1}$ &$\frac{2n}{2n-1-2\alpha}$ & $\frac{2n}{n+\alpha}$ & $\frac{2n}{n-\alpha}$ & $\frac{n-2-\alpha}{n-1-\alpha}$  \\[10pt]
					\hline 
					 &&&&& &	\\
					$OR$ & $(\frac{n}{n+1},\frac{n(n-1)^2}{n^2+1})$ & $\frac{2n^2}{(2n+1)\alpha}$ & $\frac{2n^2}{\alpha}$ & $\frac{2n^2}{n^2-n+(n+1)\alpha}$ & $\frac{2n^2}{(n-1)(n-\alpha)}$ & $\frac{n(n-1)^2-(n^2+1)\alpha}{n(n-1)(n-1-\alpha)}$ \\
					&&&&& &	\\
					\hline
					 &&&&& &	\\
					$QR$ & $\big(n-2,\frac{n(n-1)^2}{n^2+1}\big)$ & $\frac{2n}{n+\alpha}$ & $\frac{2n}{n-\alpha}$ & $\frac{2n^2}{n^2-n+(n+1)\alpha}$ & $\frac{2n^2}{(n-1)(n-\alpha)}$ & $\frac{n(n-1)^2-(n^2+1)\alpha}{n-\alpha}$ \\
					&&&&& &	\\
					\hline
					
				\end{tabular}
			\end{table}

		\end{proof}
		
		\section{Proof of Theorem \ref{thm:full2}}\label{full2}
		As in the proof of Theorem \ref{thm:full}, we have the following decomposition of $A^\alpha_{*}$.
		\begin{align*}
			A_{*}^{\alpha}f(x)\leq\sum_{j\geq0}M_{*,j}^{\alpha}f(x),
		\end{align*}
		and recall that $M_{*,j}^{\alpha}f(x)\lesssim2^j\mathcal{M}^{\alpha}f(x)$.
		\begin{lemma}\label{decay:full2}
			Let $\frac{2}{7}<\alpha\leq2$, then
			\[\|M_{*,j}^{\alpha}f\|_{L^{\frac{8}{2-\alpha}}(\R^2)}\lesssim2^{j\frac{5\alpha-2}{8}}\|f\|_{L^{\frac{8}{2+3\alpha}}(\R^2)}.\]
		\end{lemma}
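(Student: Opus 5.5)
The statement should follow by complex interpolation along the line through the two endpoints $\alpha=2$ and $\alpha=\frac27$ of the stated family. At $\alpha=2$ the exponents are $p=1$, $q=\infty$ and the growth is $2^{j}$. This endpoint is immediate from the pointwise bound $M_{*,j}^{2}f\lesssim 2^j\mathcal{M}^2 f$ (Lemma \ref{Pointwise:full}) together with the trivial estimate $\mathcal{M}^2 f\le \|f\|_{L^1(\R^2)}$. So
\[
\|M_{*,j}^{2}f\|_{L^\infty(\R^2)}\lesssim 2^{j}\|f\|_{L^1(\R^2)}.
\]
At $\alpha=\frac27$ the exponents are $p=\frac{14}{5}$, $q=\frac{14}{3}$ and the claimed constant is $2^{0}$. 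The point $(\frac{5}{14},\frac{3}{14})$ lies in the interior of the triangle $OPR$, so a $j$-uniform (indeed slightly decaying) bound there is expected. Since $\frac{5\alpha-2}{8}$ is affine in $\alpha$ and the exponents $\frac{2-\alpha}{8}$, $\frac{2+3\alpha}{8}$ are affine as well, interpolating the analytic family $z\mapsto M^{\alpha(z)}_{*,j}$ (linearized via Lemma \ref{pointwise_approximation}, with the factor $t^{\alpha(z)}$ of modulus $t^{\mathrm{Re}\,\alpha(z)}$) between $\alpha=\frac27$ and $\alpha=2$ using Lemma \ref{complexinter} gives exactly the stated bound for $\frac27<\alpha\le 2$.

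The main work is the endpoint $\alpha=\frac27$. I would proceed as in the proof of Lemma \ref{full:alpha}(2), but with the two-dimensional local estimate of Lee \cite{Lee1}, i.e.\ the frequency-localized local circular maximal bound
\[
\Big\|\sup_{t\in[1,2]}|f\ast\psi_{2^j}\ast\sigma_t|\Big\|_{L^{14/3}(\R^2)}\lesssim 2^{-j\epsilon}\|f\|_{L^{14/5}(\R^2)},\qquad \epsilon\ge 0,
\]
which is valid in the interior of Schlag's region for $n=2$. (If only a bound at a nearby point is directly available, I would first interpolate it with the $L^2$ estimate of Lemma \ref{full:0} and the trivial $L^\infty$ bound.) Rescaling gives
\[
\|M_{k,j}^{0}f\|_{L^{14/3}}\lesssim 2^{-k(\frac{5}{14}-\frac{3}{14})\cdot 2}\,2^{-j\epsilon}\|\tilde P_{j-k}f\|_{L^{14/5}},
\]
where $\tilde P_{j-k}$ is a slightly enlarged Littlewood--Paley projection. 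Multiplying by $t^{2/7}\sim 2^{2k/7}$ exactly cancels the scaling factor, so $\|M_{k,j}^{2/7}f\|_{L^{14/3}}\lesssim 2^{-j\epsilon}\|\tilde P_{j-k}f\|_{L^{14/5}}$ uniformly in $k$.

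To pass to the supremum over $k$, I would bound $\sup_k$ by the $\ell^{14/3}$ sum. Then, since $\frac{14}{5}<\frac{14}{3}$, the embedding $\ell^{14/5}\hookrightarrow\ell^{14/3}$ reduces matters to $(\sum_k\|\tilde P_{j-k}f\|_{14/5}^{14/5})^{5/14}$. Because $\frac{14}{5}\ge 2$, this is dominated by $\|(\sum_k|\tilde P_{j-k}f|^2)^{1/2}\|_{14/5}\lesssim\|f\|_{14/5}$, using Minkowski together with $\ell^2\subset\ell^{14/5}$ and then the Littlewood--Paley inequality.

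The main obstacle is sourcing the correct local frequency-localized estimate in two dimensions with at worst $O(1)$ growth at exactly $(\frac5{14},\frac3{14})$; everything else is routine scaling and interpolation. If Lee's estimate is stated only at the corner-type point with $\epsilon$-losses, I would first interpolate it with the $L^2$ decay $2^{-j\cdot 0}$-type bounds to land at $(\frac5{14},\frac3{14})$ with nonpositive exponent.
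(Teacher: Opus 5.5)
\textbf{There is a genuine gap: your interpolation is anchored to the wrong target at $\alpha=\frac27$, so it does not give the stated exponent.} Your architecture is the same as the paper's: rescale a frequency-localized local estimate, bound $\sup_k$ by an $\ell^{q}$ sum, use $\ell^{p}\hookrightarrow\ell^{q}$ and Littlewood--Paley for $p\ge2$, then complex interpolation with the $\alpha=2$ endpoint. The problem is the numbers.

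At $\alpha=\frac27$ the claimed constant is not $2^{0}$. Indeed $\frac{5\alpha-2}{8}=\frac{10/7-2}{8}=-\frac1{14}$, so the lemma asserts $2^{-j/14}$ decay at $(\frac5{14},\frac3{14})$. Suppose you interpolate a bound $2^{-j\epsilon}$ at $\alpha=\frac27$ with $2^{j}$ at $\alpha=2$. You get the exponent $-\epsilon+(1+\epsilon)\frac{7\alpha-2}{12}$, which equals $\frac{5\alpha-2}{8}$ only when $\epsilon=\frac1{14}$. With $\epsilon=0$ you get $\frac{7\alpha-2}{12}$, which exceeds $\frac{5\alpha-2}{8}$ by $\frac{2-\alpha}{24}>0$ for every $\alpha<2$. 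So your claim that this interpolation ``gives exactly the stated bound'' is false.

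This is not cosmetic. At $\alpha=\frac25$ you would obtain growth $2^{j/15}$ instead of $2^{0}$. The sign change of $\frac{5\alpha-2}{8}$ at $\alpha=\frac25$ is exactly what supplies the decay $\epsilon_2=\frac{2-5\alpha}{8}$ used later to reach the vertex $R$. An unspecified ``interior of Schlag's region'' bound with $\epsilon\ge0$ cannot deliver this. Your fallback of interpolating with Lemma \ref{full:0} only makes things worse, since for $n=2$ that lemma gives no decay in $j$.

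\textbf{The missing input is the sharp rate in estimate (1.5) of \cite{Lee1}.} For $\frac1p+\frac3q=1$ and $q>\frac{14}{3}$, the localized local circular maximal operator has constant $2^{j(\frac{3}{2p}-\frac{1}{2q}-\frac12)}$. With $p=\frac{8}{2+3\alpha}$ and $q=\frac{8}{2-\alpha}$, this is exactly $2^{j\frac{5\alpha-2}{8}}$.

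Because the condition $q>\frac{14}{3}$ is strict, the $2^{-j/14}$ bound you would need at exactly $\alpha=\frac27$ is the excluded endpoint. You therefore cannot anchor the interpolation there, which is why the lemma requires $\alpha>\frac27$.

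The paper applies (1.5) directly at each $\alpha\in(\frac27,\frac23]$, where $p\ge2$ so your summation step works as written. It uses complex interpolation with the $L^1\to L^\infty$ bound at $\alpha=2$ only for $\alpha\in(\frac23,2]$. That interpolation is lossless, because the line $\frac1p+\frac3q=1$ passes through $(1,0)$ and both the exponents and the power of $2^j$ are affine in $\alpha$. Replacing your anchor at $\frac27$ by Lee's exact rate at any $\alpha_0\in(\frac27,\frac23]$ repairs your argument and turns it into the paper's proof.
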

		\begin{proof}
			By rescaling the estimate (1.5) from \cite{Lee1}, we have
			\[\Big\|M_{k,j}^{0}f\Big\|_{L^{q}(\R^2)}\lesssim2^{-k(\frac{2}{p}-\frac{2}{q})}2^{j(\frac{3}{2p}-\frac{1}{2q}-\frac{1}{2})}\|f\ast\psi_{2^{j-k}}\|_{L^p(\R^2)}.\]
			for $q>\frac{14}{3}$ and $\frac{1}{p}+\frac{3}{q}=1$.
			
			If $p$ and $q$ satisfy the equations $\frac{1}{p}-\frac{1}{q}=\frac{\alpha}{2}$ and $\frac{1}{p}+\frac{3}{q}=1$, then $p=\frac{8}{2+3\alpha}$ and $q=\frac{8}{2-\alpha}$. When $q>\frac{14}{3}$, we have $\alpha>\frac{2}{7}$. Hence, for $\alpha>\frac{2}{7}$, we obtain
			\begin{align}\label{dim2}
				\Big\|M_{k,j}^{\alpha}f\Big\|_{L^{\frac{8}{2-\alpha}}(\R^2)}\lesssim&2^{k\alpha}\Big\|M_{k,j}^{0}f\Big\|_{L^{\frac{8}{2-\alpha}}(\R^2)}\nonumber\\
				\lesssim&2^{j\frac{5\alpha-2}{8}}\|f\ast\psi_{2^{j-k}}\|_{L^{\frac{8}{2+3\alpha}}(\R^2)}.
			\end{align}    
			For $\frac{2}{7}<\alpha\leq\frac{2}{3}$, we have $p=\frac{8}{2+3\alpha}\geq2$. Then, using \eqref{dim2} and the embedding $\ell_p\hookrightarrow\ell_q\hookrightarrow\ell_\infty$ for $p\leq q<\infty$, we have
			\begin{align}\label{dim22}
				\|M_{*,j}^{\alpha}f\|_{L^{\frac{8}{2-\alpha}}(\R^2)}&\leq \bigg(\sum_{k\in\Z}\Big\|M_{k,j}^{\alpha}f\Big\|_{L^{\frac{8}{2-\alpha}}(\R^2)}^{\frac{8}{2-\alpha}}\bigg)^{\frac{2-\alpha}{8}}\nonumber\\
				&\lesssim \bigg(\sum_{k\in\Z}2^{j\frac{5\alpha-2}{2-\alpha}}\Big\|f\ast\psi_{2^{j-k}}\Big\|_{L^{\frac{8}{2+3\alpha}}(\R^2)}^{\frac{8}{2-\alpha}}\bigg)^{\frac{2-\alpha}{8}}\nonumber\\
				&\lesssim 2^{j\frac{5\alpha-2}{8}}\|f\|_{L^{\frac{8}{2+3\alpha}}(\R^2)}.
			\end{align}
			where we used the Littlewood-Paley theory in the last inequality. This completes the proof for $\frac{2}{7}<\alpha\leq\frac{2}{3}$.
			
			For the remaining range $\frac{2}{3}<\alpha\leq2$, the desired estimate follows using a complex interpolation between \eqref{full:endpoint} for $\alpha=2$ and \eqref{dim22} for $\alpha=\frac{2}{3}$.
		\end{proof}
		
		\begin{lemma}\label{full2:interpolate}
			Let $0\leq\alpha\leq1$ and $j\geq1$, then the following holds.
			\begin{enumerate}
				\item $\|M_{*,j}^{\alpha}f\|_{L^{\frac{8}{\alpha}}(\R^2)}\lesssim2^{j\frac{3\alpha}{8}}\|f\|_{L^{\frac{8}{5\alpha}}(\R^2)}$.
				\item $\|M_{*,j}^{\alpha}f\|_{L^{\frac{8}{4-3\alpha}}(\R^2)}\lesssim2^{j\frac{3\alpha}{8}}\|f\|_{L^{\frac{8}{4+\alpha}}(\R^2)}$.
				\item When $0\leq\alpha\leq\frac{1}{3}$, $\|M_{*,j}^{\alpha}f\|_{L^{\frac{8}{4-7\alpha}}(\R^2)}\lesssim2^{-j\frac{\alpha}{8}}\|f\|_{L^{\frac{8}{4-3\alpha}}(\R^2)}$.
			\end{enumerate}
		\end{lemma}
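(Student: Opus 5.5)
The three estimates in Lemma~\ref{full2:interpolate} all come from complex interpolation, in the form of Lemma~\ref{complexinter}, between estimates already available in $\R^2$. It is convenient to parametrize these by the point $\big(\frac1p,\frac1q\big)$ together with the order of fractional integration. The scaling relation then forces $\frac1p-\frac1q=\frac{\alpha}{2}$ at every point.

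The anchors I will use are the following.
\begin{itemize}
\item[(i)] The trivial bound $\|M^{0}_{*,j}f\|_{L^\infty(\R^2)}\lesssim\|f\|_{L^\infty(\R^2)}$, at the point $(0,0)$ with $\alpha=0$.
\item[(ii)] The $L^2$ bound from Lemma~\ref{full:0} with $n=2$, $p=2$. It gives $\|M^{0}_{*,j}f\|_{L^2(\R^2)}\lesssim\|f\|_{L^2(\R^2)}$ with no loss in $j$, at the point $(\frac12,\frac12)$.
\item[(iii)] Lemma~\ref{decay:full2} with a specific parameter $\beta\in(\frac27,2]$. It gives a bound $L^{\frac{8}{2+3\beta}}\to L^{\frac{8}{2-\beta}}$ with constant $2^{j\frac{5\beta-2}{8}}$, at the point $\big(\frac{2+3\beta}{8},\frac{2-\beta}{8}\big)$.
\end{itemize}
The analytic family is $T_z=e^{z^2}\,\eta_\ell\, t_\ell^{z\alpha_1+(1-z)\alpha_0}\,\psi_{2^ja(t_\ell)}\ast\sigma_{t_\ell}$, built from the linearization of Lemma~\ref{pointwise_approximation}. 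By \eqref{linearization_upper_bound}, on the boundary lines it is dominated by $M_{*,j}^{\alpha_0}$ and $M_{*,j}^{\alpha_1}$, since $|t^{i s}|=1$. The interpolated constant is then $2^{j((1-\theta)\epsilon_0+\theta\epsilon_1)}$, and letting $\ell\to\infty$ via \eqref{convergence} and Fatou gives the bound for $M^{\alpha}_{*,j}$.

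For (1), I take $\beta=1$ in (iii). This is the point $(\frac58,\frac18)$ with constant $2^{j\frac38}$, and I interpolate it with (i) at $\theta=\alpha$. The resulting point is $(\frac{5\alpha}{8},\frac{\alpha}{8})$, the order is $\alpha$, and the constant is $2^{j\frac{3\alpha}{8}}$, which is exactly (1). For (2), I interpolate the same $\beta=1$ estimate with (ii) at $\theta=\alpha$. This gives the point $\big(\frac12+\frac{\alpha}{8},\frac12-\frac{3\alpha}{8}\big)=\big(\frac{4+\alpha}{8},\frac{4-3\alpha}{8}\big)$ with constant $2^{j\frac{3\alpha}{8}}$, which is (2).

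For (3), I take $\beta=\frac13$ in (iii). This is admissible because $\frac13>\frac27$. It gives the bound $L^{\frac83}\to L^{\frac{24}{5}}$ at the point $(\frac38,\frac{5}{24})$, with a genuine decay factor $2^{-j\frac{1}{24}}$. Interpolating with (ii) at $\theta=3\alpha\in[0,1]$ lands at $\big(\frac12-\frac{3\alpha}{8},\frac12-\frac{7\alpha}{8}\big)$, with order $\alpha$ and constant $2^{-j\frac{\alpha}{8}}$. This is precisely (3), and the restriction $\alpha\le\frac13$ is exactly $\theta\le1$.

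The only delicate point is checking that the interpolated points match the stated exponents, which the computations above do. One must also make sure the endpoint estimates used are strong type, so that Lemma~\ref{complexinter} yields strong-type conclusions; (i), (ii) and (iii) are all strong type. The main thing to verify carefully is the admissibility of the analytic family, namely the growth condition on $C_i(t)$. This is handled by the factor $e^{z^2}$ and the uniform domination in Lemma~\ref{pointwise_approximation}.
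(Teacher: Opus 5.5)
Your proposal is correct and follows the paper's own argument. Parts (1) and (2) come from complex interpolation of Lemma~\ref{decay:full2} at $\alpha=1$ with the $L^\infty$ and $L^2$ bounds of Lemma~\ref{full:0}, and part (3) comes from interpolating Lemma~\ref{decay:full2} at $\alpha=\frac13$ with the $L^2$ bound, at exactly the parameters $\theta=\alpha,\ \alpha,\ 3\alpha$ you computed. Your explicit analytic family built from Lemma~\ref{pointwise_approximation} fills in details the paper leaves implicit; the factor $e^{z^2}$ is harmless but unnecessary, since $|t_\ell^{is}|=1$ already gives boundary bounds uniform in the imaginary part.
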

		\begin{proof}
			The estimates in $(1)$ and $(2)$ can be obtained by applying complex interpolation between Lemma \ref{decay:full2} with $\alpha=1$ and Lemma \ref{full:0} (where $\alpha=0$) for $p=\infty$ and $p=2$ respectively.
			
			To obtain $(3)$, we use complex interpolation between the estimates in Lemma \ref{full:0} (where $\alpha=0$) for $p=2$ and Lemma \ref{decay:full2} with $\alpha=\frac{1}{3}$.
		\end{proof}
		Finally, we complete the proof of Theorem \ref{thm:full2} using the $L^p-$estimates of $M_{*,j}^{\alpha}$.
		\begin{proof}[Proof of Theorem \ref{thm:full2}.]
			First, note that if $\big(\frac{1}{p},\frac{1}{q}\big)\in \Delta OPR$, then $\alpha$ varies from $0$ to $\frac{2}{5}$. We will prove restricted weak type estimates on open line segments $PR$ and $OR$ separately. 
			\subsection*{Open line segment $PR$.} If $\Big(\frac{1}{p},\frac{1}{q}\Big)\in PR$, then $p=\frac{4}{2-\alpha}$ and $q=\frac{4}{2-3\alpha}$.  The use of Lemma \ref{Bourgain} to obtain estimates for points on $PR$ is divided into two cases, as depicted in the following figure.
			
\begin{figure}[H]
		\centering
			\begin{tikzpicture}[scale=6]
				\tiny
				(0,0)--(0.5,0.5)--(2/5,1/5)--cycle;
				\draw[thin][->]  (0,0)node[left]{$O$} --(1.15,0) node[right]{$\frac{1}{p}$};
				\draw[thin][->]  (0,0) --(0,1.2) node[left]{$\frac{1}{q}$};
				\draw [densely dotted] (11/10,7/10)node[right]{$P_1$} --(1,1);
				\draw[thin] (4/5,2/5)node[right]{$R$}--(1,1)node[above]{$P$}--(0,0)--cycle;
				\draw[densely dotted] (10/14,6/14)node[left]{$R_0$}--(3/4,5/12)node[below]{$R_0'$} --(1,1/3);
				\draw[thin] (3/4+1/8,5/12+7/24)--(3/4+1/8+.168,5/12+7/24+.168);
				\draw[thin] (31/40,49/120)--(31/40+12.65/40,49/120+12.65/40);
				\draw[densely dotted] (3/4,5/12) --(1,1);
				\node[right] at (1,1) {$(\frac{1}{2},\frac{1}{2})$};
				\node at (10/14,6/14) {\tiny{$\circ$}};
				\node at (1,1) {\tiny{$\circ$}};
				\node at (4/5,2/5) {\tiny{$\circ$}};
				\node at (3/4,5/12) {\tiny{$\circ$}};
				\draw[thin] (0,0) --(0.5,0.5);
			\end{tikzpicture}
			\caption{$PR$}	
		\end{figure}	
			
			When $0<\alpha\leq\frac{1}{3}$, the estimates we utilize are on the segment $R_0'P$ and $PP_1$, that is, we apply the interpolation Lemma \ref{Bourgain} using the estimates in Lemma \ref{full2:interpolate}(2) and Lemma \ref{full2:interpolate}(3) with $\epsilon_1=\frac{3\alpha}{8}$ and $\epsilon_2=\frac{\alpha}{8}$ to obtain restricted weak-type bounds for
			\begin{align*}
				\frac{1}{p}=& \frac{\theta}{\frac{8}{4+\alpha}}+\frac{1-\theta}{\frac{8}{4-3\alpha}}=\frac{2-\alpha}{4},\\
				\frac{1}{q}=&\frac{\theta}{\frac{8}{4-3\alpha}}+\frac{1-\theta}{\frac{8}{4-7\alpha}}=\frac{2-3\alpha}{4},
			\end{align*}
where $\theta = \frac{1}{4}$.

For $\frac{1}{3}<\alpha<\frac{2}{5}$, we use directly the estimates on $R_0'R$ along with the same estimate on $PP_1$ and apply the same interpolation.  Precisely, we apply the Lemma \ref{Bourgain} with the bounds in Lemma \ref{full2:interpolate}(2) and Lemma \ref{decay:full2} with $\epsilon_1=\frac{3\alpha}{8}$ and $\epsilon_2=\frac{2-5\alpha}{8}$ to obtain restricted weak-type bounds for
\begin{align*}
				\frac{1}{p}=& \frac{\theta}{\frac{8}{4+\alpha}}+\frac{1-\theta}{\frac{8}{2+3\alpha}}=\frac{2-\alpha}{4},\\
				\frac{1}{q}=&\frac{\theta}{\frac{8}{4-3\alpha}}+\frac{1-\theta}{\frac{8}{2-\alpha}}=\frac{2-3\alpha}{4},
			\end{align*}
where 
\begin{align}\label{lacunary_theta}
\theta = \frac{2-5\alpha}{2(1-\alpha)}.
\end{align}

			\subsection*{Line segment $OR$.} If $\Big(\frac{1}{p},\frac{1}{q}\Big)\in OR$, then $p=\frac{1}{\alpha}$ and $q=\frac{2}{\alpha}$. Here the interpolation picture is similar to Figure \ref{Fig:OR}, where we use estimates on the line segment $OR$ and $R_0R$ to obtain estimates for an interval on the segment $OR$ near $R$.  Precisely, for $\frac{2}{7}<\alpha<\frac{2}{5}$, we apply the interpolation Lemma \ref{Bourgain} utilizing the estimates in Lemma \ref{full2:interpolate}(1) and Lemma \ref{decay:full2} with $\epsilon_1=\frac{3\alpha}{8}$ and $\epsilon_2=\frac{2-5\alpha}{8}$ to obtain restricted weak-type bounds for
			\begin{align*}
				\frac{1}{p}=&\frac{\theta}{\frac{8}{5\alpha}}+\frac{1-\theta}{\frac{8}{2+3\alpha}}=\alpha,\\
				\frac{1}{q}=&\frac{\theta}{\frac{8}{\alpha}}+\frac{1-\theta}{\frac{8}{2-\alpha}}=\frac{\alpha}{2},
			\end{align*}
where $\theta$ is as defined in \eqref{lacunary_theta}.

			When $0<\alpha\leq\frac{2}{7}$, we apply complex interpolation between the trivial $L^\infty(\R^n)\to L^\infty(\R^n)$-estimate of $A^{0}_{lac}$ and the restricted weak-type bound of $A^{\frac{1}{3}}_{*}$ for $p=3, q=6$, which is proved in the above paragraph.

A summary of the applications of Lemma \ref{Bourgain} for $n=2$ is depicted in the following table.		
		\begin{table}[ht]
			\centering
			\begin{tabular}{| M{.6cm}| M{2.2cm}| M{1cm}| M{1.2cm}| M{1.5cm}| M{1.4cm}| M{2cm}| }
				\hline
				Line & Range of $\alpha$  & $p_1$ & $q_1$ & $p_2$ & $q_2$ & $\theta$ \\
				\hline
					&&&&& &	\\
				$PR$ & $0\leq \alpha<\frac{1}{3}$ & $\frac{8}{4+\alpha}$ & $\frac{8}{4-3\alpha}$ & $\frac{8}{4-3\alpha}$ & $\frac{8}{4-7\alpha}$  & $\frac{1}{4}$ \\
					&&&&& &	\\
				\hline
					&&&&& &	\\
				& $\frac{1}{3}\leq \alpha<\frac{2}{5}$ & $\frac{8}{4+\alpha}$ & $\frac{8}{4-3\alpha}$ & $\frac{8}{2+3\alpha}$ & $\frac{8}{2-\alpha}$  & $\frac{2-5\alpha}{2-2\alpha}$ \\
				$OR$	&&&&& &	\\
				\cline{2-7}
					&&&&& &	\\
				 & $\frac{2}{7}\leq \alpha<\frac{2}{5}$ & $\frac{8}{5\alpha}$ & $\frac{8}{\alpha}$ & $\frac{8}{2+3\alpha}$ & $\frac{8}{2-\alpha}$  & $\frac{2-5\alpha}{2-2\alpha}$ \\
					&&&&& &	\\
				\hline
			\end{tabular}
		\end{table}
		
		\end{proof}
				
		\section*{Acknowledgements}
		The authors would like to thank David Beltran for comments on a draft of the manuscript.  
		R.~Basak is supported by the National Science and Technology Council of Taiwan under research grant number 113-2811-M-003-014.  
		S. Choudhary is supported by the National Center for Theoretical Sciences through National Science and Technology Council of Taiwan under research grant number 115-2124-M-002-009-.
		D. Spector is supported by the National Science and Technology Council of Taiwan under research grant number 113-2115-M-003-017-MY3 and the Taiwan Ministry of Education under the Yushan Fellow Program.
		


\begin{bibdiv}
			\begin{biblist}
				
				\bib{BasakSpector}{article}{
					author={Basak, Riju},
					author={Spector, Daniel},
					title={Estimates for the wave equation on $\beta$-dimensional spaces of measures},
					journal={},
					volume={},
					date={},
					number={},
					pages={},
					issn={},
					review={},
					doi={https://doi.org/10.48550/arXiv.2512.12618},
				}
				
				\bib{BRS}{article}{
					author={Beltran, David},
					author={Ramos, Jo\~ao Pedro},
					author={Saari, Olli},
					title={Regularity of fractional maximal functions through Fourier
						multipliers},
					journal={J. Funct. Anal.},
					volume={276},
					date={2019},
					number={6},
					pages={1875--1892},
					issn={0022-1236},
					review={\MR{3912794}},
					doi={10.1016/j.jfa.2018.11.004},
				}
				
				
				\bib{Bourgain}{article}{
					author={Bourgain, Jean},
					title={Estimations de certaines fonctions maximales},
					language={French, with English summary},
					journal={C. R. Acad. Sci. Paris S\'er. I Math.},
					volume={301},
					date={1985},
					number={10},
					pages={499--502},
					issn={0249-6291},
					review={\MR{0812567}},
				}
				
				\bib{Bourgain1986}{article}{
   author={Bourgain, J.},
   title={Averages in the plane over convex curves and maximal operators},
   journal={J. Analyse Math.},
   volume={47},
   date={1986},
   pages={69--85},
   issn={0021-7670},
   review={\MR{0874045}},
   doi={10.1007/BF02792533},
}
				
				\bib{Calderon_lacunary}{article}{
					author={Calder\'on, Calixto P.},
					title={Lacunary spherical means},
					journal={Illinois J. Math.},
					volume={23},
					date={1979},
					number={3},
					pages={476--484},
					issn={0019-2082},
					review={\MR{0537803}},
				}
				
				\bib{CladekKrause}{article}{
					author={Cladek, Laura},
					author={Krause, Benjamin},
					title={Improved endpoint bounds for the lacunary spherical maximal
						operator},
					journal={Anal. PDE},
					volume={17},
					date={2024},
					number={6},
					pages={2011--2032},
					issn={2157-5045},
					review={\MR{4776291}},
					doi={10.2140/apde.2024.17.2011},
				}
				
				\bib{Christ}{article}{
					author={Christ, Michael},
					title={Weak type $(1,1)$ bounds for rough operators},
					journal={Ann. of Math. (2)},
					volume={128},
					date={1988},
					number={1},
					pages={19--42},
					issn={0003-486X},
					review={\MR{0951506}},
					doi={10.2307/1971461},
				}
				
				\bib{CoifmanWeiss}{article}{
					author={Coifman, R. R.},
					author={Weiss, Guido},
					title={Book Review: Littlewood-Paley and multiplier theory},
					journal={Bull. Amer. Math. Soc.},
					volume={84},
					date={1978},
					number={2},
					pages={242--250},
					issn={0002-9904},
					review={\MR{1567040}},
					doi={10.1090/S0002-9904-1978-14464-4},
				}
				
				
				
				\bib{CGG}{article}{
					author={Cowling, Michael},
					author={Garc\'ia-Cuerva, Jos\'e},
					author={Gunawan, Hendra},
					title={Weighted estimates for fractional maximal functions related to
						spherical means},
					journal={Bull. Austral. Math. Soc.},
					volume={66},
					date={2002},
					number={1},
					pages={75--90},
					issn={0004-9727},
					review={\MR{1922609}},
					doi={10.1017/S0004972700020694},
				}
				
				\bib{DRF}{article}{
					author={Duoandikoetxea, Javier},
					author={Rubio de Francia, Jos\'e L.},
					title={Maximal and singular integral operators via Fourier transform
						estimates},
					journal={Invent. Math.},
					volume={84},
					date={1986},
					number={3},
					pages={541--561},
					issn={0020-9910},
					review={\MR{0837527}},
					doi={10.1007/BF01388746},
				}
				
				\bib{Grafakos}{book}{
					author={Grafakos, Loukas},
					title={Classical Fourier analysis},
					series={Graduate Texts in Mathematics},
					volume={249},
					edition={3},
					publisher={Springer, New York},
					date={2014},
					pages={xviii+638},
					isbn={978-1-4939-1193-6},
					isbn={978-1-4939-1194-3},
					review={\MR{3243734}},
					doi={10.1007/978-1-4939-1194-3},
				}
				
				\bib{KS}{article}{
					author={Kinnunen, Juha},
					author={Saksman, Eero},
					title={Regularity of the fractional maximal function},
					journal={Bull. London Math. Soc.},
					volume={35},
					date={2003},
					number={4},
					pages={529--535},
					issn={0024-6093},
					review={\MR{1979008}},
					doi={10.1112/S0024609303002017},
				}
				
				
				
				
				\bib{Lacey}{article}{
					author={Lacey, Michael T.},
					title={Sparse bounds for spherical maximal functions},
					journal={J. Anal. Math.},
					volume={139},
					date={2019},
					number={2},
					pages={613--635},
					issn={0021-7670},
					review={\MR{4041115}},
					doi={10.1007/s11854-019-0070-2},
				}
				
				
				
				\bib{Lee1}{article}{ 
					AUTHOR = {Lee, Sanghyuk},
					TITLE = {Endpoint estimates for the circular maximal function},
					JOURNAL = {Proc. Amer. Math. Soc.},
					FJOURNAL = {Proceedings of the American Mathematical Society},
					VOLUME = {131},
					YEAR = {2003},
					NUMBER = {5},
					PAGES = {1433--1442},
					ISSN = {0002-9939,1088-6826},
					MRCLASS = {42B25 (35L05)},
					MRNUMBER = {1949873},
					MRREVIEWER = {J.\ M.\ Aldaz},
					DOI = {10.1090/S0002-9939-02-06781-3},
					URL = {https://doi.org/10.1090/S0002-9939-02-06781-3},
				}
				
				
				\bib{Oberlin}{article}{
					author={Oberlin, Daniel M.},
					title={Operators interpolating between Riesz potentials and maximal
						operators},
					journal={Illinois J. Math.},
					volume={33},
					date={1989},
					number={1},
					pages={143--152},
					issn={0019-2082},
					review={\MR{0974016}},
				}
				
					\bib{RoosSeeger}{article}{
					author={Roos, Joris},
					author={Seeger, Andreas},
					title={Problems on Spherical Maximal Functions},
					journal={},
					volume={},
					date={},
					pages={},
					issn={},
					review={},
					doi={https://arxiv.org/pdf/2511.11283},
				}
				
				
				
				\bib{Sagher}{article}{
					author={Sagher, Yoram},
					title={On analytic families of operators},
					journal={Israel J. Math.},
					volume={7},
					date={1969},
					pages={350--356},
					issn={0021-2172},
					review={\MR{0257822}},
					doi={10.1007/BF02788866},
				}
				
				
				
				
				\bib{SchlagThesis}{book}{
   author={Schlag, Wilhelm},
   title={L('p) to L('q) estimates for the circular maximal function},
   note={Thesis (Ph.D.)--California Institute of Technology},
   publisher={ProQuest LLC, Ann Arbor, MI},
   date={1996},
   pages={79},
   review={\MR{2694231}},
}

				
				\bib{Schlag}{article}{
					author={Schlag, W.},
					title={A generalization of Bourgain's circular maximal theorem},
					journal={J. Amer. Math. Soc.},
					volume={10},
					date={1997},
					number={1},
					pages={103--122},
					issn={0894-0347},
					review={\MR{1388870}},
					doi={10.1090/S0894-0347-97-00217-8},
				}
				
				\bib{Schlag-Sogge}{article}{
					author={Schlag, Wilhelm},
					author={Sogge, Christopher D.},
					title={Local smoothing estimates related to the circular maximal theorem},
					journal={Math. Res. Lett.},
					volume={4},
					date={1997},
					number={1},
					pages={1--15},
					issn={1073-2780},
					review={\MR{1432805}},
					doi={10.4310/MRL.1997.v4.n1.a1},
				}
				
				
				\bib{STW}{article}{
					author={Seeger, Andreas},
					author={Tao, Terence},
					author={Wright, James},
					title={Pointwise convergence of lacunary spherical means},
					conference={
						title={Harmonic analysis at Mount Holyoke},
						address={South Hadley, MA},
						date={2001},
					},
					book={
						series={Contemp. Math.},
						volume={320},
						publisher={Amer. Math. Soc., Providence, RI},
					},
					isbn={0-8218-2903-3},
					date={2003},
					pages={341--351},
					review={\MR{1979950}},
					doi={10.1090/conm/320/05617},
				}
				
				\bib{Stein}{article}{
					author={Stein, Elias M.},
					title={Maximal functions. I. Spherical means},
					journal={Proc. Nat. Acad. Sci. U.S.A.},
					volume={73},
					date={1976},
					number={7},
					pages={2174--2175},
					issn={0027-8424},
					review={\MR{0420116}},
					doi={10.1073/pnas.73.7.2174},
				}
				
				
				\bib{SteinWeiss}{book}{
					author={Stein, Elias M.},
					author={Weiss, Guido},
					title={Introduction to Fourier analysis on Euclidean spaces},
					series={Princeton Mathematical Series},
					volume={No. 32},
					publisher={Princeton University Press, Princeton, NJ},
					date={1971},
					pages={x+297},
					review={\MR{0304972}},
				}
				
			\end{biblist}
		\end{bibdiv}
	\end{document}